\documentclass[a4paper,11pt]{article} 
\usepackage{amssymb, latexsym, mathtools, amsmath, amsthm}
\usepackage{amsfonts}
\usepackage{color}
\usepackage{csquotes}
\usepackage{hyperref} 
\usepackage{cleveref}
\usepackage[shortlabels]{enumitem}
 
\usepackage{latexsym, amsfonts, amsthm, amsmath, amssymb, tikz, enumitem, booktabs, float,color,multicol,epsf, url,epsfig,epstopdf, array, setspace, graphicx, csquotes, xcolor,tikz-cd}

\usepackage{amssymb}
\usepackage{amssymb}
\usepackage{titlesec}
\usepackage[Rejne]{fncychap}
\usepackage{lipsum}
\usepackage{float}
\usepackage{xfrac} 
\usepackage[Rejne]{fncychap}
\usepackage{faktor}
\usepackage{nameref}

\newtheorem{theorem}{Theorem}[section]

\newtheorem{lemma}{Lemma}[section]
\newtheorem{corollary}{Corollary}[section]
\newtheorem{remark}{Remark}[section]
\newtheorem{proposition}{Proposition}[section]
\theoremstyle{definition}
\newtheorem*{algorithm}{Algorithm}

\def\r{{\sf{R}}}

\newcommand{\Z}{\mathbb Z}
\newcommand{\Q}{\mathbb Q}
\newcommand{\C}{\mathcal C}
\newcommand{\R}{\mathbb R}
\newcommand{\K}{\mathbb K}

\newcommand{\im}{\operatorname{im}}
\renewcommand{\span}{\operatorname{span}}

\newcommand{\myref}[1]{\hyperref[#1]{#1}}

\def \GL{\sf{GL}}

\newcommand{\SL}{\sf{SL}}
\newcommand{\M}{\sf{M}}
\newcommand{\PSL}{\sf{PSL}}
\newcommand{\PGL}{\sf{PGL}}
\newcommand{\Sp}{\sf{Sp}}
\newcommand{\Mat}{\sf{Mat}}

\begin{document}
\pagenumbering{arabic}
 \date{}
\author{\small Adem Zeghib}

\title{{\small\textbf{A CRITERION FOR TITS ALTERNATIVE ON THE CENTRALIZER OF A MATRIX}}}
\maketitle

\begin{abstract}
We give a necessary and sufficient condition on a matrix for its  centralizer in $\GL(n,\Z)$ to be  polycyclic,  or equivalently in this case,  not to contain a non-abelian free subgroup. We give a simple condition on the matrix ensuring that it is abelian. This can be thought of as an effective Tits alternative on centralizers in  $\GL(n,\mathbb{Z})$.   We apply these criteria to the conjugacy problem in certain arithmetic groups preserving a non-degenerate $\Q$-bilinear form, such as integral symplectic groups. We derive an effective solution to the conjugacy problem in such groups when given matrices satisfy the above criterion. This solution is based on effective solutions to the conjugacy problem in $\GL(n,\Z)$ by Eick-Hofmann-O’Brien and to an orbit problem for polycyclic groups, by  Eick and Ostheimer.
\end{abstract}


\section{Introduction}

Let $\r$ be a ring and $T \in \GL(n, \mathbb{\r})$ a matrix. The centralizer of $T$ in  $\GL(n, \r)$,

$$\mathcal{C}_{\GL(n, \r)}(T) = \{ A \in \GL(n, \r) \mid \text{$AT=TA$} \}$$

is the subgroup of all invertible matrices that commute with $T$.
When it is algorithmically relevant, the \emph{centralizer problem} for $T$ is the problem of giving an explicit generating set for $\C_{\GL(n,\r)} (T)$.  A dual problem is the \emph{conjugacy problem} in $\GL(n,\r)$: given two matrices  $T, \widehat{T} \in \GL(n, \r)$, determine whether there exists $P \in \GL(n, \r)$ such that $\widehat{T} = PTP^{-1}$. 

In \cite{eick2019conjugacy}, Eick, Hofmann and O’Brien  developed practical algorithms for the conjugacy and centralizer problems in $\GL(n, \Z)$ building on an abstract solution to the latter problems due to Grunewald in \cite{grunewald1980solution}. Subsequently, Bley, Hofmann, and Johnston improved the conjugacy problem aspect of this work in \cite{bley2022computation}, without addressing the centralizer problem. However, the structural nature of these centralizers is not evidently revealed. For instance, it is natural to ask how the Tits alternative \cite{tits1972free} applies to these subgroups, i.e. when does $\C_{\GL(n,\Z)}(T)$ contain a non abelian free subgroup and when is it abelian or virtually solvable.

In this paper, we focus on the case where $\r$ is a field $\mathbb{K}$ or the ring of integers $\mathbb{Z}$. In the first case,  if $\K$ is algebraically closed, the answer, grounded in linear algebra, is rather pleasant, although hard to find in the literature (all references we found treat only special cases).  We provide a full concise account of it in Theorem \ref{theorem3.1} for $\overline{\K}$ algebraically closed : if the Jordan
reduction of $T$ has at least two blocks of the same size for the same eigenvalue, then its centralizer contains a copy of $\GL(2,\K)$ ; otherwise, it is solvable. It is even abelian
if and only if $T$ is non-derogatory, i.e. the characteristic and minimal polynomials of $T$
coincide.

When $T$ is in $\GL(n,\Q)$, the intersection of its centralizer with $\GL(n,\Z)$ does not appear directly related to the algebraic Jordan decomposition. Leveraging deep theorems from the theory of arithmetic groups, we are able to extend the result and establish the following trichotomy.

\begin{theorem}\label{theorem1}
Let $T \in \GL(n, \Q)$ and let $\C_{\Z}:=\C_{\GL(n,\Z)}(T)$.
\begin{itemize}
\item[\rm (a)]
If $T$ has a unique Jordan block for every eigenvalue, then $\C_{\Z}$ is abelian.
\item[\rm (b)]
If $T$ has two Jordan blocks of the same size for the same eigenvalue, then $\C_{\Z}$ contains a free group on two generators.
\item[\rm (c)]
If each eigenvalue has Jordan blocks only of different size, and at least one
eigenvalue has several Jordan blocks, then $\C_{\Z}$ is polycyclic.
\end{itemize}
\end{theorem}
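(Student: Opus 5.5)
The plan is to reduce everything to the algebraic group $G=\C_{\GL(n,\overline{\Q})}(T)$, which is defined over $\Q$ because $T\in\GL(n,\Q)$. Then $\C_\Z = G(\Z)=G\cap \GL(n,\Z)$ is an arithmetic subgroup of $G(\Q)$, and the trichotomy should follow from the structure of $G$ given by Theorem \ref{theorem3.1} (applied over $\overline{\Q}$), combined with standard facts on arithmetic groups.

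\emph{Case (a).} If each eigenvalue has a single Jordan block, $T$ is non-derogatory. By Theorem \ref{theorem3.1}, $G$ is abelian; concretely, it consists of the invertible elements of $\Q[T]\otimes\overline{\Q}$. Hence the subgroup $\C_\Z$ is abelian, and nothing arithmetic is needed.

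\emph{Case (c).} By Theorem \ref{theorem3.1}, $G$ is solvable and connected, being the unit group of the commutant algebra, which is Zariski open in a vector space. Write $G=U\rtimes S$ over $\Q$ with $U$ unipotent and $S$ a torus, namely the units of the semisimple part. I would then use the classical result that arithmetic subgroups of solvable $\Q$-groups are polycyclic: $U(\Z)$ is a finitely generated torsion-free nilpotent group, and arithmetic subgroups of a $\Q$-torus are finitely generated abelian (Dirichlet unit theorem). Since $\C_\Z$ is commensurable with an extension of an arithmetic subgroup of $S$ by $U(\Z)$, it is polycyclic. The cleaner route is to cite directly the theorem (Mal'cev, Borel--Harish-Chandra) that every solvable arithmetic group is polycyclic. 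Alternatively, a solvable subgroup of $\GL(n,\Z)$ is polycyclic by Mal'cev's theorem, and $\C_\Z\subset G(\Q)$ is solvable. This last argument is the simplest and is the one I would write.

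\emph{Case (b).} This is the main obstacle, because the copy of $\GL(2)$ from Theorem \ref{theorem3.1} lives over $\overline{\Q}$, and I need integral points generating a free group. Suppose $\lambda$ has two Jordan blocks of size $k$. All Galois conjugates of $\lambda$ then share this structure, so I work over $\Q$ with the primary component attached to the minimal polynomial $p$ of $\lambda$. The centralizer of the restriction of $T$ to the generalized $p$-primary subspace contains $\GL(m,D)$, with $m\ge 2$, where $D=\Q[x]/(p^k)$ is a local algebra. In particular it contains $\SL(2,\Q(\lambda))$ acting on the top-level quotient. More concretely, I would decompose $\Q^n=V_1\oplus V_2\oplus W$ as $T$-invariant $\Q$-subspaces with $T|_{V_1}\cong T|_{V_2}$ (two isomorphic cyclic $\Q[T]$-modules). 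Then block matrices $\begin{pmatrix} a & b\\ c& d\end{pmatrix}\oplus \mathrm{id}_W$ with $a,b,c,d\in \Q[T|_{V_1}]$ commute with $T$. In particular this gives an embedding $\SL(2,\Q)\hookrightarrow G(\Q)$, with scalar entries $a,b,c,d\in\Q$. Its intersection with $\GL(n,\Z)$ contains a congruence subgroup: choose a $\Z$-lattice basis adapted to the decomposition, clear denominators by an integer $N$, and note that the elements
\[
\begin{pmatrix}1&N^2 m\\0&1\end{pmatrix},\qquad \begin{pmatrix}1&0\\N^2 m&1\end{pmatrix}
\]
become integral for a suitable fixed $m$. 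By the ping-pong lemma (Sanov) these generate a free group of rank $2$, since their off-diagonal entries are at least $2$. The delicate point to check is integrality after the change of basis; I would handle it by conjugating by the rational basis-change matrix $P$ and using that $P\,\Gamma(M)\,P^{-1}\subset \GL(n,\Z)$ for $M$ divisible by the denominators of $P$ and $P^{-1}$.
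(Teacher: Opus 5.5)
Your proposal is correct. Parts (a) and (c) coincide with the paper's argument: $\C_\Z$ lies in the abelian, resp.\ solvable, centralizer over $\overline{\Q}$ from Theorem \ref{theorem3.1}, plus Mal'cev's theorem in (c). For (b), however, you take a genuinely different route.

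The paper passes to $\R$ (Lemma \ref{lemma5}, Proposition \ref{prop4}) and shows that the quotient of $\C_{\GL(n,\R)}(T)$ by its solvable radical is non-compact (Proposition \ref{prop11}). It then uses arithmeticity of the image of $\C_\Z$ in that quotient, Borel--Harish-Chandra, Borel density and the Tits alternative to get a free subgroup, which it lifts back (Proposition \ref{prop13}).

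You stay over $\Q$. The minimal polynomial $p$ of $\lambda$ is separable, so each elementary divisor $p^e$ of the $\Q[x]$-module $\Q^n$ gives exactly one Jordan block of size $e$ at each root of $p$. This is what your remark on Galois conjugates needs, and you should state it explicitly. Hence two $\lambda$-blocks of size $k$ give $V_1\cong V_2\cong \Q[x]/(p^k)$, and therefore a $\Q$-rational embedding $\rho$ of $\SL(2,\Q)$ into the centralizer. With $P$ your rational change-of-basis matrix, $P\rho(\Gamma(N))P^{-1}\subset\GL(n,\Z)$ once $N$ is divisible by the denominators of $P$ and $P^{-1}$. Indeed, $\rho(g)-I$ and $\rho(g^{-1})-I$ then have all entries in $N\Z$. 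Sanov's ping-pong on $\bigl(\begin{smallmatrix}1&N\\0&1\end{smallmatrix}\bigr)$ and $\bigl(\begin{smallmatrix}1&0\\N&1\end{smallmatrix}\bigr)$, taking $N\ge 2$, finishes the argument.

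Your argument is elementary and constructive. It produces explicit free generators, and even an isomorphic copy of the finite-index subgroup $\Gamma(N)$ of $\SL(2,\Z)$ inside $\C_\Z$, which suits the algorithmic aims of the paper. It also bypasses the delicate points of the Lie-theoretic argument, namely arithmeticity of images in quotients and compact factors in Borel density. The paper's approach, in return, gives the two-sided criterion of Proposition \ref{prop13}. That criterion is conceptual and would apply to arithmetic subgroups of general $\Q$-groups where no rational $\SL_2$ is visible a priori.
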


Establishing a criterion for the polycyclicity of centralizers is interesting considering the vast literature on this class of group, which in particular allowed the development of software tools in GAP and related packages.
Segal provided a comprehensive overview of polycyclic groups in \cite{segal1983polycyclic}. Eick, Assmann, and Ostheimer addressed computational aspects of polycyclic matrix groups, including presentations and more general algorithms, as detailed in \cite{MR1716421} ,\cite{Assmann} and \cite{eick2022polycyclic}.

We now explain how Theorem \ref{theorem1} can be applied in the context of the conjugacy problem for certain arithmetic groups. We first recall the context. The conjugacy problem and, more precisely, the description of conjugacy classes in unitary, symplectic and orthogonal groups over a field has been extensively studied by Springer \cite{springer1951symp} and Wall \cite{wall1963conj}. 
In the case of integer coefficients, consider a subgroup $H$ of $\GL(n,\Z)$  that preserves a non-degenerate $\Q$-bilinear form (for instance, O$(p, n-p)(\Z)$, or $\Sp(n,\Z)$ if $n$ is even). If $T$ and $\widehat{T}$ are two matrices in $H$, the conjugacy problem is solved in principle using theoretical methods developed by Grunewald and Segal \cite{grunewald1980some}, but there is no hope of implementing this. In contrast, Eick, Hofmann, and O'Brien have devised another algorithm and accomplished an actual implementation of the latter algorithm,  solving the conjugacy problem in $\GL(n,\Z)$, \cite{eick2019conjugacy}. Using this algorithm directly in $H$ provides only partial information (namely, whether the two matrices are conjugate by a matrix a priori in $\GL(n,\Z)$, not necessarily in $H$).  We would then need to ensure, given a conjugator in $\GL(n,\Z)$, whether there is another conjugator in $H$. This can be expressed as an orbit problem for the centralizer of $T$, acting on the space of bilinear forms (see Proposition \ref{proposition15}). An algorithm implemented by Eick and Ostheimer \cite{eick2003orbit} solves this problem when the acting group is polycyclic. Putting together \cite{eick2019conjugacy}, \cite{eick2003orbit}, Proposition \ref{proposition15} and our criterion for the Tits alternative, Theorem \ref{theorem1}, it allows us to approach the conjugacy problem in $H$ for matrices with a polycyclic centralizer. This is addressed in Section \ref{section3.2}, leading to the following corollary: 

\begin{corollary}
    Let $b$ be a $\Q$ non degenerate bilinear form, $M$ its matrix in the canonical basis and $H_M$ the subgroup of $\GL(n,\Z)$ preserving the bilinear form $b$.
    The problem of whether two given matrices in $\GL(n,\Q)$ are conjugate by an element of $H_M$ is decidable. More precisely, Algorithm \myref{A} decides whether these matrices are conjugate in $H_M$ and, provides a conjugating element if it exists.
\end{corollary}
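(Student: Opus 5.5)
The plan is to split the question of conjugacy in $H_M$ into two steps: first decide conjugacy in the larger group $\GL(n,\Z)$, then check whether some $\GL(n,\Z)$-conjugator can be corrected to lie in $H_M$. The correction step is an orbit problem for the centralizer. Since the statement allows arbitrary $T,\widehat T\in\GL(n,\Q)$, the plan must cover all three cases of Theorem \ref{theorem1}, including case (b), where the centralizer contains a free group.

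\emph{Step 1.} Run the Eick--Hofmann--O'Brien algorithm \cite{eick2019conjugacy}, which applies to rational matrices, on $T$ and $\widehat T$. If it reports that they are not conjugate in $\GL(n,\Z)$, they are not conjugate in $H_M$ either, and we stop. Otherwise it returns $P\in\GL(n,\Z)$ with $\widehat T=PTP^{-1}$, together with a finite generating set of $\C_{\Z}=\C_{\GL(n,\Z)}(T)$, which solves the centralizer problem.

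\emph{Step 2.} Every conjugator in $\GL(n,\Z)$ has the form $PC$ with $C\in\C_{\Z}$. By Proposition \ref{proposition15}, $PC\in H_M$ holds exactly when
\[ C^{t}\,(P^{t}MP)\,C=M . \]
So the question becomes whether $M$ lies in the orbit of $M':=P^{t}MP$ under the linear action $X\mapsto C^{t}XC$ of $\C_{\Z}$ on the rational space of $n\times n$ matrices. If such a $C$ is found, then $PC$ is the conjugating element in $H_M$.

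\emph{Step 3.} Decide this orbit problem according to the trichotomy of Theorem \ref{theorem1}, using the Jordan structure of $T$, which is computable over $\Q$ from the factorisation of the characteristic and minimal polynomials and the ranks of powers of $f(T)$.
\begin{itemize}
\item In cases (a) and (c), $\C_{\Z}$ is abelian or polycyclic. We compute a polycyclic presentation from the generators of Step 1, following \cite{MR1716421,Assmann}, and apply the Eick--Ostheimer orbit algorithm \cite{eick2003orbit}. It returns either an element $C$ with $C^{t}M'C=M$ or a proof that none exists.
\item In case (b), $\C_{\Z}$ is not polycyclic, so \cite{eick2003orbit} does not apply. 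However, $\C_{\Z}$ is the group of $\Z$-points of the $\Q$-algebraic group $\{A : AT=TA\}$, hence an arithmetic group. The action $X\mapsto C^{t}XC$ is a $\Q$-rational representation, and $M,M'$ are rational vectors. For this setting Grunewald and Segal \cite{grunewald1980some} prove that the orbit problem is decidable. They also reduce to finitely many double-coset computations that produce a witness $C$ when one exists. This gives decidability, though not a practical implementation.
\end{itemize}

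Algorithm A assembles these steps, and the corollary follows: the procedure always terminates, and when it answers yes it outputs $PC\in H_M$.

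The main obstacle is case (b). We must check that the hypotheses of \cite{grunewald1980some} apply, namely that $\C_{\Z}$ is an arithmetic subgroup of a $\Q$-group acting rationally, and that their procedure effectively produces a witness rather than only a yes/no answer. For the first point, I would note that the centralizer of $T$ is the unit group of the $\Q$-algebra $\Q[T]'$ intersected with $\GL(n,\Z)$. For the second, I would use that their method enumerates finitely many representatives of the orbits meeting a rational fibre.
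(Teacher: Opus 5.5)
In cases (a) and (c) your argument is the paper's. You reduce via Proposition~\ref{proposition15} to an orbit problem for $\C_{\Z}$, obtain $P$ and generators of $\C_{\Z}$ from \cite{eick2019conjugacy}, and decide the orbit problem with \cite{eick2003orbit}. You even add the step of computing a polycyclic presentation, which Algorithm~\textbf{A} glosses over.

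The genuine difference is scope. The paper's Algorithm~\textbf{A} is explicitly stated under the hypothesis that $\C_{\GL(n,\Z)}(T)$ is polycyclic. The introduction also frames the corollary as concerning matrices with polycyclic centralizer. So the paper's proof tacitly covers only cases (a) and (c) and says nothing about case (b). Your Grunewald--Segal branch is what makes the unqualified word ``decidable'' true for all pairs. The paper's route gives an explicit procedure built from implemented algorithms. Yours gives full generality, at the price of a purely in-principle procedure in case (b).

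Two comments on that branch. First, it is not part of Algorithm~\textbf{A}, so you should not say Algorithm~\textbf{A} assembles it. Moreover, once you invoke \cite{grunewald1980some}, the detour through the centralizer gains nothing. $H_M$ is itself the group of integral points of a $\Q$-group, and conjugation on $\M_n(\Q)$ is a $\Q$-rational representation, so the same theorem applied directly to $H_M$ settles the question. This is the ``in principle'' solution the paper's introduction refers to. Second, your worry about producing a witness is unnecessary. Once existence is decided, enumerate words in the finite generating set of $\C_{\Z}$ and test $C^tM'C=M$; this search terminates.

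A minor convention point. With the paper's definition $H_M=\{A : AMA^t=M\}$, Proposition~\ref{proposition15} gives $PC\in H_M$ iff $CMC^t=P^{-1}M(P^{-1})^t$. Your condition $C^t(P^tMP)C=M$ instead characterizes $\{A : A^tMA=M\}$, the transpose of $H_M$, which differs from $H_M$ in general. For instance, with $M=\mathrm{diag}(1,-2)$, the matrix $A=\begin{pmatrix}3&2\\4&3\end{pmatrix}$ satisfies $AMA^t=M$ but not $A^tMA=M$. Your convention matches the phrase ``preserving $b$'' in the statement, and the argument is unaffected once one convention is fixed. As written, though, it does not agree with the proposition you cite.
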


\section*{\small Aknowledgements}
I would like to sincerely thank my supervisor Bettina Eick for her guidance, valuable advice, and patience in explaining the orbit stabilizer algorithm and the orbit problem related to the conjugacy problem. I would also like to express my sincere gratitude to my supervisor, François Dahmani, for his patience and support during this project.

\section{Trichotomy on the centralizer of a matrix}

\subsection{Centralizers over an algebraically closed field}

In the following $\mathbb{K}$ is a field, and
$\overline{\mathbb{K}}$ its algebraic closure. We start by recalling the classical Jordan normal form theorem. 

\begin{theorem}[Jordan normal form]
Let $T \in \GL(n,\K)$ then there exists a unique set of matrices $\{J_1,...,    J_k\} $  with $J_i =\begin{pmatrix}
\lambda_i & 1& 0 & 0 \\
0 & \lambda_i & \ddots & 0\\
0 & 0 & \ddots & 1  \\
 0&  0  &  0& \lambda_i
\end{pmatrix} \in \GL(r_i,\overline{\K})$, a unique tuple $(m_1,m_2,...,m_k)$ and $V \in \GL(n,\overline{\K})$ (not necessarily unique) that satisfy 
$$ VTV^{-1}=diag(J_1,J_1,...,J_2,...,J_k) \quad \text{where $J_i$ appears $r_i$ times and $\sum r_{i}\times{m_i}=n$} .$$

\end{theorem}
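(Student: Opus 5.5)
The plan is to derive the statement from the structure theory of $\K[x]$-modules, working over $\overline{\K}$ throughout. We make $V=\overline{\K}^n$ into a $\overline{\K}[x]$-module by letting $x$ act as $T$. Since $\overline{\K}[x]$ is a principal ideal domain, the structure theorem for finitely generated torsion modules gives a decomposition into cyclic primary components
$$V\cong\bigoplus_{i}\overline{\K}[x]/(x-\lambda_i)^{r_i}.$$
Over an algebraically closed field every irreducible polynomial is linear, which is why only factors of the form $(x-\lambda)^r$ appear. Also, $\lambda_i\neq 0$ because $T$ is invertible.

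For existence, fix a summand $\overline{\K}[x]/(x-\lambda)^r$ and take the basis $(x-\lambda)^{r-1},\dots,(x-\lambda),1$ of classes. In this basis, multiplication by $x$ is exactly the Jordan block $J$ with eigenvalue $\lambda$ and size $r$. Concatenating these bases over all summands gives an invertible $V$ with $VTV^{-1}$ block diagonal, with diagonal blocks the Jordan blocks. Grouping equal blocks together gives the form $\mathrm{diag}(J_1,\dots,J_1,\dots,J_k)$ with multiplicities $m_i$ and $\sum r_i m_i=n$.

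For uniqueness, we show the blocks and multiplicities are determined by $T$ alone. The number of Jordan blocks for $\lambda$ of size at least $s$ equals
$$\dim\ker(T-\lambda)^s-\dim\ker(T-\lambda)^{s-1}.$$
This is checked block by block: for a single block $J$ of eigenvalue $\mu$ and size $r$, the dimension $\dim\ker(J-\lambda)^s$ equals $\min(s,r)$ if $\mu=\lambda$ and $0$ otherwise. These kernel dimensions are similarity invariants, so they do not depend on $V$. Hence the multiset of pairs (eigenvalue, size), and with it the $J_i$ and $m_i$, is uniquely determined. The matrix $V$ is not unique, because composing with any element of the centralizer of the normal form gives another valid choice.

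The only substantive input is the structure theorem, specifically its primary (elementary divisor) decomposition; I would cite it rather than reprove it. The part needing the most care is the kernel-dimension count, since it is what gives uniqueness independently of the choice of $V$.
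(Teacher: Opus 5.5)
Your proof is correct, and the paper offers nothing to compare it with: it does not prove this theorem, it only recalls the Jordan normal form as a classical fact before using it. Your route is the standard one: existence from the elementary divisors of $\overline{\K}^n$ as a $\overline{\K}[x]$-module, and uniqueness from the similarity invariants $\dim\ker(T-\lambda)^s-\dim\ker(T-\lambda)^{s-1}$. Deriving uniqueness from these kernel counts, rather than from the uniqueness half of the structure theorem, is the cleaner choice.

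You have also silently repaired the statement. As printed, $J_i$ has size $r_i$ yet ``appears $r_i$ times'', which is incompatible with $\sum r_i m_i=n$. Your reading is the intended one: size $r_i$, multiplicity $m_i$, and uniqueness up to reordering the blocks.

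Your kernel count is also exactly what justifies the paper's Remark~\ref{remark4}. For a $\Q$-irreducible factor $\pi$ of the characteristic polynomial, Galois conjugation gives all roots of $\pi$ the same Jordan structure. Hence the quantity tested there equals $\deg(\pi)$ times the number of blocks of size exactly $j$ at each root of $\pi$, so the condition ``$>\deg(\pi)$'' detects two equal blocks.

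One minor point: you use $V$ both for the module and for the conjugating matrix. If the Jordan basis forms the columns of $P$, then $P^{-1}TP$ is in normal form, so the statement's $V$ is $P^{-1}$.
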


This has the following consequence on centralizers in $ \GL(n,
 \overline{\mathbb{K}})$.

\begin{theorem}\label{theorem3.1}
\label{derog}
Let $T \in \GL(n, \K)$ and let $\C_{\overline{\K}}=\C_{\GL(n,\overline{\K})}(T)$ its centralizer with coefficient in $\overline{\K}$.

\begin{itemize}
\item[\rm (a)]
If $T$ has a unique Jordan block for every eigenvalue, then $\C_{\overline{\K}}$ is abelian.
\item[\rm (b)]
If $T$ has two Jordan blocks of the same size for the same eigenvalue, then $\C_{\overline{\K}}$ contains a copy $\GL(2,\overline{\K})$.
\item[\rm (c)]
If  each eigenvalue has several ($\geq 1$) Jordan blocks, but only of different sizes, and one eigenvalue has at least two blocks,  then $\C_{\overline{\K}}$ is a non abelian solvable group.
\end{itemize}
\end{theorem}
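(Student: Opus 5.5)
We conjugate by the matrix $V$ from the Jordan normal form theorem. This replaces $\C_{\overline{\K}}$ by the centralizer of $J=VTV^{-1}$ without changing its isomorphism type, so we may assume $T=J$ is in Jordan form.

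\textbf{Reduction to a single eigenvalue.} Write $\overline{\K}^n=\bigoplus_\lambda E_\lambda$, where $E_\lambda=\ker(T-\lambda)^n$ are the generalized eigenspaces. Any $A$ commuting with $T$ commutes with every polynomial in $T$. Each $E_\lambda$ is the image of a polynomial in $T$ (the spectral projector), so $A$ preserves every $E_\lambda$. Hence $\C_{\overline{\K}}=\prod_\lambda \C_{\GL(E_\lambda)}(T|_{E_\lambda})$. Replacing $T$ by $T-\lambda$ on $E_\lambda$, we reduce to centralizers of a nilpotent matrix $N=\bigoplus_i N_{s_i}$, a sum of nilpotent Jordan blocks of sizes $s_1,\dots,s_k$. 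A product of groups is abelian or solvable iff each factor is. It contains $\GL(2,\overline{\K})$ if some factor does, and it is non-abelian if some factor is. So every case reduces to one eigenvalue.

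\textbf{The three cases.}

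(a) For a single block $N_s$, a matrix commutes with $N_s$ iff it is an upper triangular Toeplitz matrix, i.e.\ a polynomial in $N_s$. The centralizer is therefore contained in $\overline{\K}[N_s]$, a commutative algebra, so it is abelian.

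(b) Suppose two blocks have equal size $s$. Group them as $N_s\oplus N_s=N_s\otimes I_2$ acting on $\overline{\K}^s\otimes\overline{\K}^2$. Then $g\mapsto I_s\otimes g$, extended by the identity on the remaining blocks, embeds $\GL(2,\overline{\K})$ into the centralizer.

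(c) Suppose the sizes satisfy $s_1>s_2>\dots>s_k$. The centralizer algebra $\mathcal A=\{X: XN=NX\}$ is described blockwise: the block $X_{ij}\colon \overline{\K}^{s_j}\to\overline{\K}^{s_i}$ is an upper triangular Toeplitz rectangular matrix (the standard computation from $N_{s_i}X_{ij}=X_{ij}N_{s_j}$). Let $\mathcal J\subset\mathcal A$ consist of those $X$ whose diagonal blocks $X_{ii}$ have zero constant term and whose off-diagonal blocks are arbitrary admissible ones. The key claim is that $\mathcal J$ is a nilpotent two-sided ideal with $\mathcal A/\mathcal J\cong\overline{\K}^k$, the map being $X\mapsto$ (the constant terms of the $X_{ii}$). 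Granting this:
\begin{itemize}
\item $\C=\mathcal A^\times$ is an extension of the abelian group $(\overline{\K}^*)^k$ by the unipotent group $1+\mathcal J$.
\item $1+\mathcal J$ is nilpotent, since $\mathcal J$ is nilpotent.
\item Hence $\C$ is solvable.
\end{itemize}
Non-abelianness follows from an explicit example when $k\ge2$. Take $E=\mathrm{diag}(I_{s_1},0,\dots)$ and a nonzero off-diagonal element $X_{12}$, for instance the inclusion $\overline{\K}^{s_2}\to\overline{\K}^{s_1}$ onto the first $s_2$ coordinates, which commutes appropriately with the blocks. Then $1+E$ and $1+X_{12}$ are units of $\mathcal A$ that do not commute.

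\textbf{Main obstacle.} The key step is the claim in (c) that $\mathcal J$ is an ideal with commutative quotient. This is exactly where distinct sizes are used. For $i\ne j$ the product $X_{ij}Y_{ji}$ is a Toeplitz map $\overline{\K}^{s_i}\to\overline{\K}^{s_i}$ factoring through a space of different dimension. Composing the two rectangular Toeplitz shapes shows it has zero constant term, since one of the two maps $\overline{\K}^{s_i}\to\overline{\K}^{s_j}$ or $\overline{\K}^{s_j}\to\overline{\K}^{s_i}$ has its nonzero entries strictly above the main diagonal. This computation must be done carefully. Once it holds, $X\mapsto(\text{constant terms of }X_{ii})_i$ is a ring homomorphism whose kernel $\mathcal J$ consists of elements acting nilpotently on the flag $\ker N\subset\ker N^2\subset\cdots$ refined by block order. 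Nilpotence of $\mathcal J$ then follows, for example, from the fact that a nil ideal in a finite-dimensional algebra is nilpotent.
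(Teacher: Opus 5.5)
Your proof is correct. For (a) and (b) it is essentially the paper's argument: polynomials in a single Jordan block for (a), and for (b) the block embedding $g\mapsto I_s\otimes g$, which is the paper's map $\Psi$. For (c) you take a different route.

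\textbf{The paper's route for (c).} Proposition~\ref{prop6} and Corollary~\ref{cor1} exhibit a complete flag preserved by the whole centralizer. It orders the Jordan basis by kernel level, then by decreasing block size, and deduces solvability from simultaneous triangularization. Non-commutativity comes from the two unipotent elements of Proposition~\ref{prop8}.

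\textbf{Your route.} You show that $X\mapsto(\text{constant term of }X_{ii})_i$ is a surjective algebra homomorphism $\mathcal A\to\overline{\K}^k$ with nilpotent kernel $\mathcal J$. Your key computation is right. For $s_i>s_j$, the admissible blocks are $\begin{pmatrix}T\\ 0\end{pmatrix}\colon\overline{\K}^{s_j}\to\overline{\K}^{s_i}$ and $\begin{pmatrix}0 & T'\end{pmatrix}\colon\overline{\K}^{s_i}\to\overline{\K}^{s_j}$, with $T,T'$ upper triangular Toeplitz and the second lying strictly above the diagonal. Hence both $X_{ij}Y_{ji}$ and $Y_{ji}X_{ij}$ have zero diagonal.

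\textbf{What your approach buys.} It gives more than solvability: $\C_{\overline{\K}}$ is an extension of the torus $(\overline{\K}^*)^k$ by the unipotent group $1+\mathcal J$, so it is nilpotent-by-abelian. It also isolates exactly where distinct sizes are used. If a size is repeated $m\ge 2$ times, the semisimple quotient acquires a factor $M_m(\overline{\K})$, which is where case (b) comes from.

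\textbf{Where it still uses a flag.} Your nilpotence step for $\mathcal J$ rests on a preserved flag, and it is the same flag as the paper's. The kernel filtration of $N$ refined by decreasing block size is the flag of the paper's ordered basis $f_1,\dots,f_n$. With respect to it, every $X\in\mathcal A$ is upper triangular with diagonal entries the constant terms of the $X_{ii}$. So $\mathcal J$ is strictly triangular and $\mathcal J^n=0$ directly, without invoking the theorem on nil ideals.

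\textbf{One slip.} Your unit $1+E=\mathrm{diag}(2I_{s_1},I,\dots)$ is not invertible when $\operatorname{char}\K=2$, and the statement allows any field. Use $1+tE$ with $t\neq 0,-1$ instead; then $(1+tE)(1+X_{12})-(1+X_{12})(1+tE)=tX_{12}\neq 0$. Alternatively, use the paper's pair $1+X_{12}$ and $1+Y_{21}$, where $Y_{21}$ is the projection onto the last $s_2$ coordinates; this works in every characteristic.
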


Observe that the three assumptions are mutually exclusive and cover all cases.

\begin{remark}\label{remark4}
Given $T\in \GL(n,\Q)$, it is possible to effectively determine in which case $T$ falls, proceeding as follows. First, compute its characteristic polynomial $\chi$, and its minimal polynomial  $\mu$.    If $\chi = \mu$, we are in case (a). Assume that they differ. Compute the $\mathbb{Z}$-irreducible factorization of $\chi$. Then check whether there is an irreducible factor $\pi$ of $\chi$  and $j \geq 1$  for which     \[\left( dim(\ker(\pi^j(T)) - dim(\ker(\pi^{j-1}(T)\right) - \left( dim(\ker(\pi^{j+1}(T)) - dim(\ker(\pi^{j}(T)) \right) >deg(\pi) .  \] If there exists such an integer,  we are in case (b). If for any factor $\pi$ there is no such integer, the iterated kernels dimension grows only by $0$ or $1$ for every root of $\pi$  at any step, and  we are in case (c).
\end{remark}

First, let us prove claim (a) (which is the case of non-derogatory matrices). The centralizer of a single Jordan matrix is abelian, a property that follows from the classical fact that such matrices admit a cyclic vector. In general, the centralizer of an endomorphism is also the direct product of the centralizers of  its restrictions to generalized eigenspaces. Here, those spaces correspond to each Jordan block, the centralizer is a direct product of abelian subgroups. \newline

We now prove the point $(b)$.

\begin{proposition}\label{prop4}
If $T \in \GL(n,\K)$ has two Jordan blocks of the same size for the same eigenvalue, then $\C_{\GL(n,\overline{\K})}(T)$ contains a copy of $\GL(2,\overline{\K})$. Moreover, if $\K=\R$, $\C_{\GL(n,\R)}(T)$ contains a copy of $\GL(2, \R)$.
\end{proposition}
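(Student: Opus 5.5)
The plan is to build the copy of $\GL(2,\overline{\K})$ by a tensor-product construction, working inside the Jordan normal form. By the Jordan normal form theorem there is $V \in \GL(n,\overline{\K})$ with $VTV^{-1} = D$ block diagonal. By hypothesis two of the blocks are equal, say $J_1 = J_2 = J$ of size $r$ with eigenvalue $\lambda$.

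Reorder the basis so these two blocks come first. Then $D = \operatorname{diag}(J, J, D')$, and the first $2r$ coordinates are identified with $\overline{\K}^2 \otimes \overline{\K}^r$. On this subspace $\operatorname{diag}(J,J) = I_2 \otimes J$.

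For $g \in \GL(2,\overline{\K})$ set
\[
\Phi(g) = \operatorname{diag}(g \otimes I_r,\; I_{n-2r}).
\]
Concretely, $g \otimes I_r$ is the $2\times 2$ block matrix whose $(i,j)$ block is $g_{ij} I_r$. The map $\Phi$ has three properties:
\begin{itemize}
\item it is a group homomorphism;
\item it is injective;
\item $\Phi(g)$ commutes with $D$, since $(g\otimes I_r)(I_2\otimes J) = g \otimes J = (I_2 \otimes J)(g \otimes I_r)$, and the identity block trivially commutes with $D'$.
\end{itemize}
Hence $g \mapsto V^{-1}\Phi(g)V$ embeds $\GL(2,\overline{\K})$ into $\C_{\GL(n,\overline{\K})}(T)$. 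This proves the first assertion and is routine.

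The real statement is the main point, because the Jordan form may need complex conjugation; I expect this to be the only real obstacle. There are two cases.
\begin{itemize}
\item If the repeated eigenvalue $\lambda$ is real, the conjugating matrix $V$ can be taken real, since the Jordan decomposition of a real matrix with real eigenvalue is realized over $\R$ on the generalized eigenspace. The construction above then works verbatim with $g \in \GL(2,\R)$.
\item If $\lambda \notin \R$, use the real Jordan form instead. The blocks for $\lambda$ and $\bar\lambda$ pair up into real blocks $J^{\R}$ of size $2r$. The hypothesis for $\lambda$ gives two equal real blocks $J^{\R}$, and since the real Jordan form is obtained over $\R$, the real conjugator exists. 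On $\R^2 \otimes \R^{2r}$ we have $\operatorname{diag}(J^{\R},J^{\R}) = I_2 \otimes J^{\R}$, and the same map $g \mapsto g \otimes I_{2r}$ gives the embedding of $\GL(2,\R)$.
\end{itemize}

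Alternatively, and more intrinsically, I would phrase this basis-free. Let $E = \ker(T-\lambda)^N$ be the generalized eigenspace (over $\R$, take that of $\pi(T)$ for $\pi$ the real irreducible factor of $\lambda$). The two equal blocks give a $T$-invariant decomposition $W_1 \oplus W_2$ with a $T$-equivariant isomorphism $\phi: W_1 \to W_2$ defined over the base field, together with a $T$-invariant complement. The matrix $g$ then acts on $W_1 \oplus W_2 \cong W_1 \otimes \K^2$ via $\phi$. The existence of such a $\K$-rational decomposition over $\K=\R$ is exactly what the real Jordan form supplies.
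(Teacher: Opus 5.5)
Your proof is correct and is essentially the paper's argument: your map $g\mapsto\operatorname{diag}(g\otimes I_r, I_{n-2r})$ is exactly the paper's embedding $\Psi$, and for $\K=\R$ you cite the real Jordan form where the paper proves the needed special case by hand (Lemma~\ref{lemma5}, which pairs $J(\lambda)$ with $J(\overline{\lambda})$ by an explicit change of basis). One small wording fix: when $\lambda$ is real, the full conjugator $V$ need not be real, since other eigenvalues may be non-real; what you need is that $T$ is $\R$-conjugate to $\operatorname{diag}(J,J,D')$ with $D'$ some real matrix, and the real Jordan form gives this.
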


\begin{proof}
The matrix $T$ is conjugate in $\GL(n,\overline{K})$ to the following matrix
    $T \sim_{\GL(n,\overline{\K})}
J'=\begin{pmatrix}
J & 0& 0 \\
0 & J & 0\\
0 & 0 & A 
\end{pmatrix} $
then 
$\begin{pmatrix}
aI & bI & 0 \\
cI & dI & 0\\
0 & 0 & I 
\end{pmatrix} $
commutes with $J'$ for every $a,b,c,d \in \overline{\K} $ and is invertible as long as $ad-bc\ne 0$.
The following map is an injective homomorphism : 

   \[ \Psi  :
\begin{array}{ccccc}
&  \GL(2,\overline{\K})  & \longrightarrow & \GL(n,\overline{\K}) & \\
& \begin{pmatrix}
a & b  \\
c & d 
\end{pmatrix}  & \longmapsto &
\begin{pmatrix}
aI & bI & 0 \\
cI & dI & 0 \\
0 & 0 & I
\end{pmatrix} \\
\end{array}
 \]

If $\K=\R$, we need the following lemma. 

\begin{lemma}\label{lemma5}
    Let $T$ be a real matrix such that $T$ is $\mathbb{C}$-conjugate to $diag(J(\lambda),A )$, with $J(\lambda)$ a Jordan block of size $s$. \newline
    If $\lambda \notin \R$ then
    \begin{itemize}
        \item $T$ is $\mathbb{C}$-conjugate to $diag(J(\lambda), J(\overline{\lambda})), A')$, with $J(\overline{\lambda})$ also of  size $s$.
        \item $T$ is $\R$-conjugate to $diag(N,B)$ with $N$ and $B$ real matrices.
    \end{itemize}
\begin{proof}
    First, since $T$ is a real matrix, then $\overline{T}=T$, which leads to the existence of a Jordan block $J(\overline{\lambda})$ of the same size.
    Then, we can easily check that 
$$\begin{pmatrix}
I_s & I_s \\
iI_s & -iI_s  
\end{pmatrix}^{-1}= \frac{1}{2}\begin{pmatrix}
I_s & -iI_s \\
I_s & iI_s  
\end{pmatrix}$$ 

and that the conjugate

$$\begin{pmatrix}
I_s & I_s \\
iI_s & -iI_s  
\end{pmatrix}
\begin{pmatrix}
J(\lambda) & 0 \\
0 & J(\overline{\lambda})  
\end{pmatrix}
\begin{pmatrix}
I_s & -iI_s \\
I_s & iI_s  
\end{pmatrix} \quad \text{is a real matrix}$$

We can continue in the same fashion and gather Jordan blocks of the same size that are conjugate to obtain the matrix $B$. Therefore, $T$ and $diag(N,B)$ are $\mathbb{C}$-conjugate. We conclude using the classical fact that if two real matrices are $\mathbb{C}$-conjugate they are also $\R$-conjugate.

\end{proof}
    
\end{lemma}

We continue our proof. Let $T$ be a real matrix as in the statement of Proposition \ref{prop4}.  Using the last lemma we get that $T$ is $\R$-conjugate to $diag(N,N,A')$ with $N$ and $A$ real matrices. We conclude using the homomorphism $\Psi$ as before to show that there is a copy of $\GL(2,\R)$ in $\C_{\GL(n,\R)}(T)$.

\end{proof}                                   

 We now want to prove the claim $(c)$. As argued earlier (for case (a)), the centralizer of $T$ is a direct sum of the centralizers of the restrictions to generalized eigenspaces. We can therefore reduce our study to the case of matrices with a single eigenvalue.

\begin{proposition}\label{prop6}
    Let $T$ be a matrix with one eigenvalue. If all Jordan blocks of $T$ are of different sizes, then there exists a flag  of $\K^n$ that is preserved by its centralizer.
    
\end{proposition}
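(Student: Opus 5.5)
Write $T=\lambda I+N$ with $N$ nilpotent, so that $\C(T)=\C(N)$. Every $A$ commuting with $N$ preserves each $\ker N^j$ and each $\im N^j$. My plan is to use the subspaces built from these to produce a complete flag, working over $\overline{\K}$ (or over $\K$ itself when $\lambda\in\K$; if $\lambda$ lies only in $\overline{\K}$, the statement is to be read in $\overline{\K}^n$). The natural invariant spaces are
\[
W_{i,j}=\ker N^i\cap \im N^j,\qquad i,j\ge 0,
\]
together with their sums. Each of these is preserved by every $A\in\C(T)$.

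Fix a Jordan basis. Let the blocks have distinct sizes $s_1>s_2>\dots>s_k$, and let the basis vectors be $e_{p,q}$ for $1\le q\le s_p$, with $Ne_{p,q}=e_{p,q-1}$ and $e_{p,0}=0$. Then
\[
\im N^j=\span\{e_{p,q}: q\le s_p-j\},\qquad \ker N^i=\span\{e_{p,q}: q\le i\}.
\]
Hence $W_{i,j}$ is the coordinate subspace spanned by the $e_{p,q}$ with $q\le \min(i,s_p-j)$.

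The key step is to show that the lattice generated by these coordinate subspaces under sums and intersections contains a complete chain. For that it suffices to find a total order on the basis vectors $e_{p,q}$ such that every initial segment is a sum of spaces $W_{i,j}$.

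Take a single vector $e_{p,q}$ and set $i=q$, $j=s_p-q$. Then $W_{q,s_p-q}$ consists of the $e_{p',q'}$ with $q'\le q$ and $q'\le s_{p'}-s_p+q$. Because the sizes are distinct, blocks with $s_{p'}<s_p$ contribute only $q'<q$. Blocks with $s_{p'}>s_p$ contribute all $q'\le q$, while block $p$ itself contributes $q'\le q$. So $W_{q,s_p-q}$ equals the span of all $e_{p',q'}$ with $q'<q$, together with $e_{p',q}$ for $p'\le p$.

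Order the basis lexicographically: first by height $q$, then by block index $p$ (larger blocks first). The initial segment ending at $e_{p,q}$ is then exactly $W_{q,s_p-q}$. This gives a complete flag
\[
0\subset V_1\subset\cdots\subset V_n=\K^n,
\]
each term of the form $\ker N^i\cap\im N^j$, and each term is preserved by $\C(T)$. In the chosen basis the centralizer is therefore upper triangular, which is what case (c) needs for solvability.

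I expect the main obstacle to be the bookkeeping that the ordering is consistent, in particular that $W_{q,s_p-q}$ picks up no unexpected vectors from smaller blocks. This is exactly where distinctness of the sizes is used: if $s_{p'}=s_p$ with $p'\ne p$, the two vectors $e_{p,q}$ and $e_{p',q}$ could not be separated, which matches the $\GL(2)$ phenomenon in Proposition \ref{prop4}. I would check the case $q=s_p$ separately, where $j=0$.
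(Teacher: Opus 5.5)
Your route is the paper's route. Your order (height first, then larger blocks first) is exactly the paper's lexicographic order on $(k_i,-r_i)$. Like the paper, you identify the initial segment ending at $e_{p,q}$ with $W_{q,s_p-q}=\ker N^{q}\cap\im N^{s_p-q}$.

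That identification is false, so there is a genuine gap. The paper's written proof contains the same slip, in its claim $\span(f_1,\dots,f_i)\subset\ker T^{k_i}\cap\im T^{r_i}$.

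Your own formula shows that a smaller block $p'$ contributes to $W_{q,s_p-q}$ only the vectors $e_{p',q'}$ with $q'\le q-(s_p-s_{p'})$. This set lies inside $\{q'<q\}$ but is in general a proper part of it. Your next sentence turns that upper bound into an equality.

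Concretely, take blocks of sizes $3$ and $1$. Your order is $e_{1,1},e_{2,1},e_{1,2},e_{1,3}$, and the initial segment ending at $e_{1,2}$ is $\span\{e_{1,1},e_{2,1},e_{1,2}\}$. But $W_{2,1}=\ker N^2\cap\im N=\span\{e_{1,1},e_{1,2}\}$, because $e_{2,1}\notin\im N$. So, as written, you have not shown that the terms of your flag are $\C(T)$-invariant.

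The flag is nevertheless invariant, and the repair stays within your own plan of using sums of the $W_{i,j}$. The initial segment ending at $e_{p,q}$ is
\[
\ker N^{q-1}+W_{q,s_p-q}.
\]
The first summand $\ker N^{q-1}$ supplies every $e_{p',q'}$ with $q'\le q-1$. The second summand $W_{q,s_p-q}$ supplies $e_{p',q'}$ for all $q'\le q$ from the blocks with $s_{p'}\ge s_p$. From the smaller blocks it supplies only vectors of height at most $q-1$, which are already in $\ker N^{q-1}$. Both summands are preserved by $\C(T)$, hence so is every term of the flag.

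Alternatively, you can check invariance directly. A $\K[N]$-module map from the block of size $s_p$ to a block of size $s_{p'}$ never raises height. When $s_{p'}<s_p$ it lowers height by at least $s_p-s_{p'}$. Hence every element of $\C(N)$ sends $e_{p,q}$ into the span of the vectors of height $<q$ together with the $e_{p',q}$ with $s_{p'}\ge s_p$.

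The case $q=s_p$ that you singled out is actually harmless: there $W_{s_p,0}=\ker N^{s_p}$ already equals the initial segment. The problem arises only for $q<s_p$.
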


\begin{proof}
Considering $T-\lambda I$, we can suppose that $T$ is a nilpotent matrix and its centralizer is the same as $T$.
Take $e_1,e_2,...,e_n$ a basis for a Jordan normal form.
Now for each $e_i$, we set $k_i :=\text{min}\{j: e_i \in \ker(T^{j})\} $ and $r_i=\text{max} \{j:e_i \in \im(T^{j})\}$. 
Since all Jordan blocks have different sizes, one can check that $e_i$ is the $k_i^{th}$ vector of its Jordan block that has size $k_i+r_i$.
Therefore, the couple $(k_i,-r_i)$  uniquely determines the vector $e_i$ among the set $\{e_j, \quad 1 \le j\le  n\}$.
We take the lexicographic order on the set $(k_i,-r_i) \in \Z^2$ and arrange the vectors in increasing order to obtain the basis $\mathcal{F}:=(f_1,f_2,...,f_n)$. Let us show that $\span (f_1,...,f_i)=\ker(T^{k_i})\cap \im(T^{r_i})$. \newline
By construction, we have that $\span(f_1,...,f_i) \subset \ker(T^{k_i})\cap \im(T^{r_i})$.
We need to show that $\ker(T^{k_i}) \cap \im(T^{r_i}) \subset \span (f_1,...,f_i)$.
By construction of the basis $\mathcal{F}$, $\ker(T^{k_i})= \span (f_1,...,f_i,f_{i+1},...,f_{j})$ such that $f_i,f_{i+1},...f_j$ have the same $k_i$ and $f_{j+1} \notin \ker(T^{k_i})$. For all $k\in \{i+1,...,j\}$, $r_i> r_k$, which leads to $\ker(T^{k_i}) \cap \im(T^{r_i}) \subset \span (f_1,...,f_i)$.
The centralizer preserves $\ker(T^{k_i})$ and $\im(T^{r_i})$, so it also preserves the flag $\span(f_1,...,f_i)=\ker(T^{k_i}) \cap \im(T^{r_i})$.

\end{proof}

\begin{corollary}\label{cor1}
Let $T\in \GL(n,\K)$ be a matrix with one eigenvalue with all its Jordan blocks of different sizes. Then $\C_{\GL(n,\overline{\K})}(T)$ is solvable.
\end{corollary}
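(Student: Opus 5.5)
The plan is to use Proposition \ref{prop6}: since the centralizer preserves a complete flag, it embeds into a group of upper triangular matrices, which is solvable. First we pass to the algebraic closure. Over $\overline{\K}$, $T$ has a single eigenvalue $\lambda$ and all its Jordan blocks have distinct sizes. We replace $T$ by the nilpotent matrix $T-\lambda I$; this does not change the centralizer, since $A$ commutes with $T$ if and only if it commutes with $T-\lambda I$. We then apply Proposition \ref{prop6}, working over $\overline{\K}$, i.e.\ in $\overline{\K}^n$. It provides a basis $\mathcal{F}=(f_1,\dots,f_n)$, obtained by reordering a Jordan basis, such that each subspace $\span(f_1,\dots,f_i)=\ker(T^{k_i})\cap\im(T^{r_i})$ is invariant under every $A\in\C_{\GL(n,\overline{\K})}(T)$.

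Next let $P\in\GL(n,\overline{\K})$ be the change-of-basis matrix to $\mathcal{F}$. Conjugation by $P$ sends $\C_{\GL(n,\overline{\K})}(T)$ into the group $B_n(\overline{\K})$ of invertible upper triangular matrices, because a linear map preserving the complete flag $\span(f_1)\subset\span(f_1,f_2)\subset\cdots$ has upper triangular matrix in the basis $\mathcal{F}$. Conjugation is an injective homomorphism, and subgroups of solvable groups are solvable, so it is enough to know that $B_n(\overline{\K})$ is solvable.

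This is the classical fact, which we only recall. The map to the diagonal part $B_n\to (\overline{\K}^*)^n$ is a homomorphism onto an abelian group, and its kernel is the unipotent group $U_n$. The group $U_n$ has the normal series $U_n=U^{(1)}\supset U^{(2)}\supset\cdots\supset U^{(n)}=\{I\}$, where $U^{(k)}$ consists of unipotent matrices whose entries vanish on the first $k-1$ superdiagonals, and each quotient $U^{(k)}/U^{(k+1)}$ is abelian, isomorphic to $(\overline{\K}^{n-k},+)$. Hence $B_n$ is solvable, of derived length at most $1+\lceil\log_2 n\rceil$, and so is the centralizer.

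The only real point requiring care is that Proposition \ref{prop6} must be applied over $\overline{\K}$ rather than $\K$, since the Jordan basis, and hence the flag, lives in $\overline{\K}^n$. Its proof is purely linear-algebraic and works verbatim over $\overline{\K}$: the flag is built from $\ker(T^{k})$ and $\im(T^{r})$, which are preserved by the full centralizer in $\GL(n,\overline{\K})$. So I expect no genuine obstacle beyond checking this.
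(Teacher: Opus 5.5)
Your argument is correct and is essentially the paper's proof: the centralizer-invariant complete flag of Proposition~\ref{prop6}, taken in $\overline{\K}^n$, conjugates $\C_{\GL(n,\overline{\K})}(T)$ into the solvable group of invertible upper triangular matrices. Two small caveats, neither of which affects the conclusion. First, the identity $\span(f_1,\dots,f_i)=\ker(T^{k_i})\cap\im(T^{r_i})$ that you quote (taken from the paper's proof of Proposition~\ref{prop6}) fails in general: for nilpotent blocks of sizes $3$ and $1$, at $i=3$ the left side is $3$-dimensional and the right side $2$-dimensional; the correct description is $\ker(T^{k_i-1})+\bigl(\ker(T^{k_i})\cap\im(T^{r_i})\bigr)$, which is still invariant under the centralizer. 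Second, your bound $1+\lceil\log_2 n\rceil$ on the derived length additionally needs $[U^{(i)},U^{(j)}]\subseteq U^{(i+j)}$, since the abelian quotients alone only give derived length at most $n$.
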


\begin{proof}
Let $A$ be a matrix that commutes with $T$. Proposition \ref{prop6} provides a total flag that must be preserved by all matrices that commute with $T$. Therefore, there is a common trigonalisation basis for all elements of $\C_{\GL(n,\overline{\K})}(T)$. Since the group of upper triangular matrices is solvable of length n, and all subgroups of solvable groups are solvable, the group $\C_{\GL(n,\overline{\K})}(T) $ is solvable of length at most $n$.
\end{proof}

\begin{remark}
One should be cautious that the total flag (and therefore the trigonalisation basis) proposed by Proposition \ref{prop6} is not the natural basis of the Jordan decomposition of a matrix. In particular, the matrices proposed in the next proof are not in contradiction with our discussion in Proposition \ref{prop6}. 
    
\end{remark}

\begin{proposition}\label{prop8}
    Let $T$ be a matrix with one eigenvalue and with at least two Jordan blocks. Then $\C_{\GL(n,\overline{\K})}(T)$ is non abelian.
\end{proposition}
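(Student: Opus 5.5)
We reduce to the nilpotent case as in the proof of Proposition \ref{prop6}: replacing $T$ by $T-\lambda I$ does not change its centralizer, so we may assume $T$ is nilpotent and, after conjugating in $\GL(n,\overline{\K})$, that $T=\operatorname{diag}(J_p,J_q,A)$ with $J_p,J_q$ nilpotent Jordan blocks of sizes $p\ge q\ge 1$ and $A$ the remaining blocks. Since conjugation preserves commutativity, it suffices to exhibit two invertible matrices commuting with this normal form that do not commute with each other. If $p=q$, Proposition \ref{prop4} already gives a copy of $\GL(2,\overline{\K})$ inside the centralizer, which is non-abelian. So the main case is $p>q$.

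For $p>q$, the plan is to build two elementary unipotent elements coupling the two blocks. Let $e_1,\dots,e_p$ be the Jordan basis of the first block with $Te_1=0$ and $Te_i=e_{i-1}$, and $f_1,\dots,f_q$ that of the second. Define nilpotent maps $X,Y$ (zero on the other blocks) by
\[
X(f_j)=e_j,\quad X(e_i)=0, \qquad Y(e_i)=f_{i-(p-q)} \ (\text{zero if } i\le p-q),\quad Y(f_j)=0 .
\]
The key check is that $X$ and $Y$ commute with $T$. This is the standard description of $\operatorname{Hom}$ between Jordan blocks as Toeplitz maps: $X$ sends the cyclic generator $f_q$ to $e_q$, and $T^q e_q=0$, so $X$ is a module map; $Y$ sends $e_p$ to $f_q$, and $T^p f_q=0$, so $Y$ is a module map as well. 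Then $I+X$ and $I+Y$ are unipotent, hence invertible, and lie in the centralizer.

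It remains to show $(I+X)(I+Y)\neq(I+Y)(I+X)$, i.e.\ $XY\ne YX$. Evaluate both on $e_p$: we get $XY(e_p)=X(f_q)=e_q\neq 0$, while $YX(e_p)=Y(0)=0$. Hence the two elements do not commute and $\C_{\GL(n,\overline{\K})}(T)$ is non-abelian. (For $p=q$ the same construction also works, since then $XY=\mathrm{id}$ on the first block while $YX=0$ there, so one does not even need Proposition \ref{prop4}.)

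The only delicate step is the verification that $X$ and $Y$ commute with $T$, and in particular getting the index shift in $Y$ right so that it maps the longer block onto the shorter one compatibly with $T$. The cleanest way to check it is through the cyclic-generator criterion above: a map between cyclic $\overline{\K}[T]$-modules is determined by the image of the generator, and that image only has to be annihilated by the annihilator of the source. Everything else is direct computation.
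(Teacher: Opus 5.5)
Your proposal is correct and is essentially the paper's proof. The paper's matrices $A$ and $B$, with off-diagonal blocks $I_{r,k}$ and $I_{k,r}$, are exactly your $I+X$ and $I+Y$; its check $ABe_r\neq BAe_r$ is your computation $XY(e_p)=e_q\neq 0=YX(e_p)$ on the top vector of the longer block, and since the paper allows $r\ge k$ with equality, the case $p=q$ is handled by the same construction without Proposition \ref{prop4}, as your parenthetical remark notes.
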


\begin{proof}
Consider the matrix $J:=\begin{pmatrix}
J_r(\lambda) & 0& 0 \\
0 & J_k(\lambda) & 0\\
0 & 0 & J' 
\end{pmatrix}$ with $r \ge k$.

We define $I_{p, q}=\begin{pmatrix}
0_{p, (q-p)} & I_p \\
\end{pmatrix}$
if $p\le q$
and 
$I_{p, q}=\begin{pmatrix}
I_q\\ 0_{(p-q), q} \\
\end{pmatrix}$ if $p\ge q$.

Take $A:=\begin{pmatrix}
I_r & I_{r, k}& 0 \\
0 & I_k & 0\\
0 & 0 & I_l 
\end{pmatrix}$
and $B:=\begin{pmatrix}
I_r &0 & 0 \\
I_{k, r}& I_k & 0\\
0 & 0 & I_l 
\end{pmatrix}$.

We show that $A$ and $B$ belong to $\C_{\GL(n,\overline{\K})}(T)$ and do not commute.
A direct computation shows that $AJ=JA$, $BJ=JB$, and $det(A)=det(B)=1$. 
We have  $ABe_r=A(e_r+e_{r+k})=e_r+e_{r+k}+e_{k}$
and $BAe_r=Be_r=e_r+e_{r+k}$. This shows that $A$ and $B$ do not commute.

\end{proof}

Proof of $(c)$ : using Corollary \ref{cor1},  $\C_{\GL(n,\overline{\K})}(T)$ is a solvable group, and Proposition \ref{prop8} tells us that it is a non abelian group.

We have proven Theorem \ref{theorem3.1}.

\subsection[Centralizer over Z]{Centralizer over \(\Z\)}

 In this part, we will take $T\in \GL(n,\Q)$, this assumption makes $\C_{\GL(n,\Q)}(T)$ a $\Q$-algebraic group, i.e. a subgroup of $\GL(n,\mathbb{C})$ defined by polynomial equations with coefficients in $\Q$. This assumption is crucial for this part. We want to improve the trichotomy in the case of coefficients in $\Z$. In particular, we want to prove the following.
 
 \begin{proposition}\label{prop10}
Let $T\in \GL(n,\Q)$. If a non abelian free group is contained in $\C_{\GL(n,\mathbb{C})}(T)$ then the group $\C_{\GL(n,\Z)}(T)$ also contains a non abelian free group.
 \end{proposition}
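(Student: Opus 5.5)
The plan is to avoid heavy arithmetic-group machinery and build the free group explicitly, by redoing the construction of Proposition~\ref{prop4} over $\Q$ and then passing to a congruence subgroup. There are four steps.

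\textbf{Step 1: reduce to case (b).} If $\C_{\GL(n,\mathbb{C})}(T)$ contains a non-abelian free group, it is not solvable. By Theorem~\ref{theorem3.1}, cases (a) and (c) give abelian or solvable centralizers, so $T$ must fall in case (b): some eigenvalue $\lambda$ has two Jordan blocks of the same size $s$. Let $\pi\in\Q[x]$ be the minimal polynomial of $\lambda$ over $\Q$. Since $T$ is rational, Galois conjugation permutes the roots of $\pi$ and preserves the Jordan structure. Hence every root of $\pi$ has at least two Jordan blocks of size $s$.

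\textbf{Step 2: a rational $\GL(2,\Q)$ in the centralizer.} I will use the primary rational canonical form over $\Q$. There, $T$ is $\GL(n,\Q)$-conjugate to a block diagonal matrix of companion matrices $C(\pi_i^{e_i})$. The number of blocks $C(\pi^{s})$ equals the number of Jordan blocks of size $s$ attached to any single root of $\pi$. This identification is the point I expect to need the most care, and I will verify it by computing $\dim\ker \pi^j(T)$ on both sides, as in Remark~\ref{remark4}. It follows that there is $P\in\GL(n,\Q)$ with $PTP^{-1}=\operatorname{diag}(N,N,A')$ and $N=C(\pi^s)$ rational. The map $\Psi$ from the proof of Proposition~\ref{prop4}, now defined on $\GL(2,\Q)$, embeds $\GL(2,\Q)$ into $\C_{\GL(n,\Q)}(\operatorname{diag}(N,N,A'))$. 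Therefore $g\mapsto P^{-1}\Psi(g)P$ is an injective homomorphism $\GL(2,\Q)\to\C_{\GL(n,\Q)}(T)$.

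\textbf{Step 3: integrality.} Choose an integer $d\ge1$ such that $dP$ and $dP^{-1}$ have integer entries, and set $m=d^2$. Let $\Gamma(m)\subset\SL(2,\Z)$ be the principal congruence subgroup of level $m$. For $g\in\Gamma(m)$ we have $g=I+mX$ with $X$ integral. Then
\[
P^{-1}\Psi(g)P = I + m\,P^{-1}\Psi(X)P ,
\]
and the right-hand side has integer entries, since $m P^{-1}\Psi(X)P=(dP^{-1})\Psi(X)(dP)$. The same holds for $g^{-1}\in\Gamma(m)$. Thus $P^{-1}\Psi(g)P\in\GL(n,\Z)$, and it commutes with $T$.

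\textbf{Step 4: the free group.} $\Gamma(m)$ contains a non-abelian free group. For example, $\begin{pmatrix}1&2m\\0&1\end{pmatrix}$ and $\begin{pmatrix}1&0\\2m&1\end{pmatrix}$ generate a free group by the ping-pong lemma, as a subgroup of the free Sanov subgroup. Its image under the injective map $g\mapsto P^{-1}\Psi(g)P$ is a free group on two generators inside $\C_{\GL(n,\Z)}(T)$.

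The main obstacle is Step 2: producing a $\Q$-rational conjugation that exhibits two \emph{identical} rational blocks. Once that is in hand, Steps 3 and 4 are routine.
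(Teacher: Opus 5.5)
Your proof is correct, and it takes a genuinely different and much more elementary route than the paper. The paper only realizes the copy of $\GL(2)$ over $\R$ (Proposition~\ref{prop4} with Lemma~\ref{lemma5}), and then argues non-constructively. Proposition~\ref{prop11} shows that the quotient of $\C_{\GL(n,\R)}(T)$ by its solvable radical is non-compact. Proposition~\ref{prop13} then argues in a chain: the image of $\C_{\GL(n,\Z)}(T)$ in this quotient is arithmetic, hence a lattice by Borel--Harish-Chandra, hence Zariski dense by Borel density, hence not virtually solvable, hence it contains a free group by the Tits alternative. Your key observation is that the $\GL(2)$ can already be defined over $\Q$, because the two equal Jordan blocks come from two equal rational primary blocks $C(\pi^s)$. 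This is right: $\pi$ is separable, so $C(\pi^s)$ is $\overline{\Q}$-similar to $\bigoplus_{\mu} J_s(\mu)$, with $\mu$ running over the roots of $\pi$. This also makes your Galois remark in Step~1 unnecessary. After that, restricting to a principal congruence subgroup to clear denominators is all that is needed. One notational fix: in Step~3, $\Psi(X)$ must denote the linear extension $X\mapsto\left(\begin{smallmatrix} x_{11}I & x_{12}I & 0\\ x_{21}I & x_{22}I & 0\\ 0 & 0 & 0\end{smallmatrix}\right)$, with zero lower-right block, not $\Psi$ as defined on $\GL(2)$; with that reading $\Psi(g)=I+m\Psi(X)$ holds. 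Your route gives a self-contained argument with explicit generators for the free subgroup, in keeping with the algorithmic spirit of the paper. It even shows that $\C_{\GL(n,\Z)}(T)$ contains an isomorphic copy of the finite-index subgroup $\Gamma(m)$ of $\SL(2,\Z)$. The paper's route instead yields the structural equivalence of Proposition~\ref{prop13}: the semisimple part is non-compact if and only if the integral points contain a free subgroup. This is a general phenomenon for arithmetic groups, not tied to the special form of centralizers, but it costs the paper a reliance on deep theorems.
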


We already know that if a non abelian free group
is contained in $\C_{\GL(n,\mathbb{C})}(T)$ then $\C_{\GL(n,\R)}(T)$ contains a copy of
$\GL(2,\R)$ by the trichotomy, and Proposition \ref{prop4}.

The next two propositions discuss a crucial and non trivial step in our proof. We show that if a copy of $\GL(2,\R)$ is contained in the Lie group $\C_{\GL(n,\R)}(T)$, then  its arithmetic part $\C_{\GL(n,\Z)}(T)$ contains a non abelian free subgroup.

\begin{proposition}\label{prop11}
 Let $T\in \GL(n,\R)$. If $\C_{\GL(n,\R)}(T)$ contains a copy
of $\GL(2,\R)$, then the quotient of $\C_{\GL(n,\R)}(T)$ by the maximal normal solvable subgroup (the solvable radical) is not compact. 
\end{proposition}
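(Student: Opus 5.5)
Write $G=\C_{\GL(n,\R)}(T)$. It is the group of units of the real associative algebra $\mathcal{A}=\{X\in \Mat(n,\R): XT=TX\}$. So $G$ is an open subset of $\mathcal{A}$, and it is a real linear algebraic group, in particular a Lie group with finitely many connected components. Let $R$ be its solvable radical, i.e. the maximal connected normal solvable subgroup. Our plan is to argue by contradiction. Suppose $G/R$ is compact. We will show that the image of the copy of $\GL(2,\R)$, and in fact already of $\SL(2,\R)$, in $G/R$ would then be forced to be both trivial and non-trivial.

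First consider the subgroup $S=\Psi(\SL(2,\R))\subset G$, where $\Psi$ is the embedding built in Proposition \ref{prop4} after conjugating $T$ to $diag(N,N,A')$ over $\R$. Let $\pi: G\to G/R$ be the quotient map. The group $\pi(S)$ is a continuous image of $\SL(2,\R)$, so it is a connected Lie subgroup of $G/R$. Because $\SL(2,\R)$ is simple modulo its centre $\{\pm I\}$, the kernel $S\cap R$ is either central, hence contained in $\{\pm I\}$, or all of $S$. It cannot be all of $S$: then $S$ would be a subgroup of the solvable group $R$, hence solvable, which is false since $\SL(2,\R)$ is perfect and non-abelian.

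So $\pi|_S$ has finite kernel and $\pi(S)$ is locally isomorphic to $\SL(2,\R)$. Now suppose $G/R$ is compact. Every Lie subalgebra of the Lie algebra of a compact group carries an invariant inner product, so it is of compact type. But $\mathfrak{sl}(2,\R)$ is not of compact type, since its Killing form is indefinite. This contradiction shows $G/R$ is non-compact.

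The main obstacle is making this last step rigorous. Concretely, the Lie algebra of $G/R$ is $\mathfrak{g}/\mathfrak{r}$, where $\mathfrak{r}$ is the solvable radical of $\mathfrak{g}$. One needs the image of $\mathfrak{sl}(2,\R)$ in $\mathfrak{g}/\mathfrak{r}$ to be isomorphic to $\mathfrak{sl}(2,\R)$. This holds by simplicity: the kernel is a solvable ideal of $\mathfrak{sl}(2,\R)$, hence zero. One then applies the standard fact that any Lie subalgebra of the Lie algebra of a compact Lie group is reductive with negative semi-definite Killing form on its semisimple part. To avoid Lie-theoretic subtleties, I would give a concrete alternative. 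The element $\Psi(diag(t,t^{-1}))$ acts on $\mathfrak{g}/\mathfrak{r}$ by the adjoint action with an eigenvalue $t^2$, namely on the image of the root vector $\Psi\begin{pmatrix}0&1\\0&0\end{pmatrix}$, which is non-zero modulo $\mathfrak{r}$. Hence its image in $G/R$ has unbounded powers under $\mathrm{Ad}$. In a compact group, however, $\mathrm{Ad}$ has bounded image. One should also note that $G$ has finitely many components, so that whether one uses $G/R$ or $G^0/R$ does not affect compactness.
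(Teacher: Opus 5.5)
Your argument is correct and follows the same route as the paper. The copy of $\SL(2,\R)$ meets the radical only in $\{\pm I\}$, so $G/R$ contains a non-trivial image of $\SL(2,\R)$, and no compact group can contain one. The only difference is in this last step: you prove it directly, via the injection $\mathfrak{sl}(2,\R)\hookrightarrow\mathfrak{g}/\mathfrak{r}$ and the unbounded eigenvalue $t^2$ of $\mathrm{Ad}$ of the image of $\Psi(\mathrm{diag}(t,t^{-1}))$, whereas the paper cites Proposition 5.5 of its reference; working with $\SL(2,\R)$ from the start also lets you skip the paper's detour through $\PGL(2,\R)$.
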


\begin{proof}
Recall that the solvable radical of a Lie group is closed (see Theorem 3.18.13 in \cite{varadarajan2013lie}), therefore, the quotient $\widehat{G}$ of $\C_{\GL(n,\R)}(T)$  by its solvable radical is a finite dimensional Lie group (see Theorem 2.9.6 in \cite{varadarajan2013lie}). Observe that the copy of $\GL(2,\R)$ of the assumption cannot intersect the radical solvable more than by its center (the only  normal subgroup of   $\GL(2,\R)$ that is solvable). Therefore, $\widehat{G}$ contains a copy of $\GL(2,\R)$ or $\PGL(2,\R)$. Note that $\PGL(2,\R)=\PSL(2,\R)$, so in both cases it contains a non-trivial image of $\SL(2,\R)$.
If the quotient of $\C_{\GL(n,\R)}(T)$ was compact,  this would provide a compact finite
dimensional representation of $\SL(2,\R)$.  However, using classical tools from Lie algebra, it is shown in Proposition 5.5 of \cite{zbMATH06064700}, that, except for the trivial one, there is no compact finite dimensional representation in $\SL(2,\R)$.

\end{proof}

The next proposition uses two non trivial features, one explains the behavior of arithmetic groups with respect to quotients, and one is the theorem of Borel Harish-Chandra \cite{borel1962arithmetic} that states that arithmetic groups in semi simple Lie groups over $\Q$ are lattices.  For this theorem, we use the formulation given in \cite{platonov1993algebraic}, since it is more suitable for our study.

\begin{theorem}[Theorem 4.14 \cite{platonov1993algebraic}]
Let $G$ be a semisimple $\Q$-group, and let $\Gamma \subset G_\R$ be an arithmetic subgroup. Then $\Gamma$ is a lattice in $G_\R$.
    
\end{theorem}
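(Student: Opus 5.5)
The statement is the Borel--Harish-Chandra theorem, and the plan is to deduce it from reduction theory for $\GL(n,\Z)$ (Siegel sets) plus a way of cutting $G$ out of $\GL(n)$ by an orbit condition. I do not intend to reprove it in the paper; the source is \cite{borel1962arithmetic} and \cite{platonov1993algebraic}.

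First I reduce to a convenient arithmetic subgroup. Fix a faithful $\Q$-embedding $G\subset \GL(n)$. Any two arithmetic subgroups of $G_\R$ are commensurable, and being a lattice passes to finite-index subgroups and to finite-index overgroups. So it suffices to treat $\Gamma=G_\R\cap \GL(n,\Z)$.

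Next I invoke classical reduction theory for $\GL(n,\R)$. Write $\GL(n,\R)=KAN$ (Iwasawa decomposition), and for $t,\omega$ let
\[
\Sigma_{t,\omega}=K A_t N_\omega ,
\]
where $A_t$ is the set of diagonal $a$ with $a_i/a_{i+1}\le t$ and $N_\omega$ is compact. This gives a Siegel set $\Sigma=\Sigma_{t,\omega}$ with $\Sigma\,\GL(n,\Z)=\GL(n,\R)$. The set $\Sigma$ meets only finitely many of its $\GL(n,\Z)$-translates (the Siegel property), and $\Sigma\cap \SL(n,\R)$ has finite Haar measure. The finiteness follows from a direct integral computation in $KAN$ coordinates: the modular factor $\prod_{i<j} a_j/a_i$ is integrable over $A_t$ intersected with the determinant-one torus. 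This already settles the case $G=\SL_n$.

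To pass to a general semisimple $G$, I use Chevalley's theorem to obtain a $\Q$-representation $\rho:\GL(n)\to \GL(V)$ and a vector $v\in V_\Q$ whose stabilizer is exactly $G$. Since $G$ is reductive, Matsushima's criterion (the quotient $\GL(n)/G$ is affine) gives that the orbit $\GL(n)\cdot v$ is closed. Scaling $v$, I may assume $v$ lies in a $\GL(n,\Z)$-stable lattice $L\subset V_\Q$. The key finiteness statement is that $G_\R$ meets only finitely many sets of the form $\Sigma\gamma$ with $\gamma\in\GL(n,\Z)$, up to $\Gamma$; equivalently, $\GL(n,\Z)\backslash(\GL(n,\R)\cdot v\cap L)$ is finite. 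The representatives $x_1,\dots,x_m$ then give
\[
\Omega=\bigcup_{i} \bigl(G_\R\cap \Sigma x_i\bigr),\qquad \Omega\,\Gamma=G_\R .
\]
For this step I would use that the points of $L$ in a closed orbit form a discrete set, and that on $\Sigma$ the vectors $g\cdot v$ in $L$ are controlled by the $A_t$-component.

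The main obstacle is proving that each $G_\R\cap\Sigma x_i$ has finite Haar measure in $G_\R$. Here semisimplicity (no nontrivial $\Q$-characters) is essential, since for a $\Q$-split torus the corresponding set is clearly of infinite measure. I would first try to reduce to a Siegel set of $G$ itself, adapted to a minimal $\Q$-parabolic $P=M_PA_PN_P$ with $\Q$-split torus $A_P$. Then I would show that $G_\R\cap\Sigma x_i$ lies in finitely many such $G$-Siegel sets, by comparing $\Q$-roots of $G$ with restrictions of roots of $\GL(n)$. Finally I would repeat the modular-function integrability computation. That computation converges because $A_P$ contains no central $\Q$-split part, which is exactly the content of the absence of $\Q$-characters. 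Compactness of $M_P$ modulo its arithmetic subgroup, up to $\Q$-anisotropic factors, would be handled by the same reduction applied to the anisotropic kernel. Together with the finite union above, this gives finite covolume, so $\Gamma$ is a lattice in $G_\R$.
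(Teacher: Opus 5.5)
The paper gives no proof of this statement. It quotes it from Platonov--Rapinchuk (it is the Borel--Harish-Chandra theorem \cite{borel1962arithmetic}) and uses it as a black box in Proposition \ref{prop13}, so your decision to cite it is exactly the paper's treatment. Your sketch follows the classical Borel--Harish-Chandra strategy, but read as a proof it contains three concrete errors.

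First, Matsushima's criterion says that $\mathrm{GL}_n/G$ is affine, not that a vector with stabilizer $G$ has a closed orbit. For $G=\mathrm{SL}_2\subset\mathrm{GL}_2$, $\rho=\det$ and $v=1$, the stabilizer is $G$ but the orbit is $\mathbb{C}^\times$. You must re-choose $(\rho,v)$ by embedding the affine $\Q$-variety $\mathrm{GL}_n/G$ equivariantly as a closed orbit in some representation.

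Second, your first formulation of the key finiteness (``$G_\R$ meets only finitely many $\Sigma\gamma$ up to $\Gamma$'') is right, but it is not equivalent to finiteness of $\mathrm{GL}_n(\Z)\backslash(\mathrm{GL}_n(\R)v\cap L)$. For $g=\sigma\gamma$ one has $g\in G_\R$ iff $\rho(\gamma)v=\rho(\sigma)^{-1}v$. All the vectors $\rho(\gamma)v$ lie in the single orbit $\rho(\mathrm{GL}_n(\Z))v$, so counting orbits says nothing here; orbit finiteness is a consequence of what you need, not an equivalent of it. What you need is the Borel--Harish-Chandra lemma that $\rho(\Sigma^{-1})v\cap L$ is finite, whose proof genuinely uses closedness of the orbit; discreteness of $L$ alone gives nothing. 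Then take $\Omega=\bigcup_j(G_\R\cap\Sigma c_j)$ with $c_j\in\mathrm{GL}_n(\Z)$ chosen so that the $\rho(c_j)v$ exhaust $\rho(\Sigma^{-1})v\cap\rho(\mathrm{GL}_n(\Z))v$. Translating $\Sigma$ by the orbit representatives $x_i$, which are vectors, does not make sense.

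Third, for $g=kan$ the Haar density is $\prod_{i<j}a_i/a_j$. The factor $\prod_{i<j}a_j/a_i$ you wrote is unbounded on $A_t$ and not integrable there.

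The genuine gap, however, is the last step. That $G_\R\cap\Sigma c_j$ has finite Haar measure is the heart of the theorem, and you only announce it. The containment of $G_\R\cap\Sigma c_j$ in finitely many $\Q$-Siegel sets of $G$ is not justified by ``comparing $\Q$-roots of $G$ with restrictions of roots of $\mathrm{GL}_n$''. It is essentially Borel's reduction theory for $G$: Siegel sets relative to a minimal $\Q$-parabolic, translated by a finite subset of $G_\Q$, cover $G_\R$ modulo $\Gamma$, with Godement's compactness criterion handling the anisotropic kernel. Once that theorem is available, the detour through $\Omega$ is superfluous, because a $\Q$-Siegel set has finite volume as soon as $G$ has no nontrivial $\Q$-characters. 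So your outline reduces the statement to results of the same depth as the statement itself. That is fine as a citation, which is all the paper does, but it is not a proof.
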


\begin{proposition}\label{prop13}
Let $T\in \GL(n,\Q)$. The quotient of $\C_{\GL(n,\R)} (T)$  by the solvable radical is not compact if and only if   
$\C_{\GL(n,\Z)} (T) $ contains a non abelian free subgroup.
\end{proposition}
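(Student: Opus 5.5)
Let $G:=\C_{\GL(n,\mathbb{C})}(T)$, viewed as a $\Q$-algebraic group (it is the unit group of the $\Q$-algebra $\C_{\M(n,\Q)}(T)$). Let $R$ be its solvable radical, defined over $\Q$. Choose a Levi decomposition $G=L\ltimes R_u$ over $\Q$, with $R_u$ the unipotent radical and $L$ reductive. Let $S=[L,L]$, the semisimple part, which is defined over $\Q$. In fact $G$ is explicitly a product of groups of the form $\GL(m_i, D_i)$ over division algebras, modulo unipotent parts. The real points $G_\R/R_\R$ are then isogenous, up to finite index, to $S_\R$ modulo its compact factors. Hence the quotient by the solvable radical is non-compact if and only if $S_\R$ is non-compact. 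Here I am using that the radical of the Lie group $G_\R$ is the real points of the algebraic radical up to finite index.

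\textbf{($\Rightarrow$).} Assume $S_\R$ is non-compact. Let $\Gamma_S:=S\cap\GL(n,\Z)$. This is an arithmetic subgroup of $S$, and it lies in $\C_{\GL(n,\Z)}(T)$. By Borel--Harish-Chandra (Theorem 4.14 of \cite{platonov1993algebraic}) $\Gamma_S$ is a lattice in $S_\R$. Since $S_\R$ is non-compact semisimple, a lattice in it is infinite and not virtually solvable. The cleanest route is Borel density: $\Gamma_S$ is Zariski dense in the product of the non-compact factors, which are not solvable, so the Zariski closure of $\Gamma_S$ is not virtually solvable. Tits' alternative \cite{tits1972free} then gives a non-abelian free subgroup of $\Gamma_S\subset \C_{\GL(n,\Z)}(T)$.

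\textbf{($\Leftarrow$).} Assume $S_\R$ is compact (equivalently $S$ is $\R$-anisotropic, so the quotient is compact). I want to show $\Gamma:=\C_{\GL(n,\Z)}(T)$ is virtually solvable, hence has no free subgroup. Project $\Gamma$ to $G/R$. By the behaviour of arithmetic groups under $\Q$-morphisms (image of an arithmetic subgroup is arithmetic, Theorem 4.1 of \cite{platonov1993algebraic}), the image of $\Gamma$ is an arithmetic subgroup of the $\Q$-group $G/R$. Arithmetic subgroups are discrete in the real points. The identity component of $(G/R)_\R$ is compact semisimple, possibly times a torus part; I will absorb the central torus by dividing by the radical properly, so that $G/R$ is semisimple. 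A discrete subgroup of a compact group is finite. So the image of $\Gamma$ is finite, and a finite-index subgroup of $\Gamma$ lies in $R$, which is solvable. A virtually solvable group contains no non-abelian free subgroup.

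The main obstacle is the bookkeeping between the Lie-theoretic radical of $G_\R$ in the statement and the algebraic $\Q$-radical. One must check that $(G/R)_\R$ and $G_\R/\mathrm{Rad}(G_\R)$ agree up to finite groups and compact factors. One must also check that arithmeticity passes to the quotient $G\to G/R$, which needs the surjective $\Q$-morphism result rather than mere discreteness. I would handle this by invoking that $R$ is defined over $\Q$ (the radical of a $\Q$-group is $\Q$-defined in characteristic $0$) and comparing Lie algebras, $\mathrm{Lie}(R)_\R=\mathrm{rad}(\mathrm{Lie}(G_\R))$.
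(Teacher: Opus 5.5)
Your proof is correct, but it follows a different route from the paper, most visibly in the converse. The paper proves ($\Leftarrow$) through its linear-algebra trichotomy. A free subgroup of $\C_{\GL(n,\Z)}(T)$ is also one of $\C_{\GL(n,\mathbb{C})}(T)$, which rules out cases (a) and (c) of Theorem~\ref{theorem3.1} (solvable centralizers), so $T$ has two Jordan blocks of the same size for the same eigenvalue. Proposition~\ref{prop4} then embeds $\GL(2,\R)$ in $\C_{\GL(n,\R)}(T)$, and Proposition~\ref{prop11} shows that its image in the quotient by the radical, a non-trivial image of $\SL(2,\R)$, cannot lie in a compact group. You instead prove the contrapositive for an arbitrary $\Q$-group: if the quotient is compact, the image of $\C_{\GL(n,\Z)}(T)$ in $G/R$ is arithmetic, hence discrete in a compact group, hence finite, so $\C_{\GL(n,\Z)}(T)$ is virtually solvable. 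This costs a second use of Theorem 4.1 of \cite{platonov1993algebraic} (you rightly note that discreteness of the image is what requires it) plus the Lie/algebraic radical comparison. In exchange it is independent of the Jordan-form analysis, whereas the paper's route uses no arithmetic input in this direction. In fact your own structural description already trivializes this direction. Here the $D_i$ are the number fields $\Q[x]/(\pi)$, so $S_\R$ is a product of copies of $\SL(m,\R)$ and $\SL(m,\mathbb{C})$ with $m\geq 2$ and has no compact factors. Hence a compact quotient forces $S=1$, i.e.\ $G$ is itself solvable. For the same reason your phrase ``$S_\R$ modulo its compact factors'' is harmless here. In general, though, the solvable radical does not absorb compact semisimple factors, so the quotient is locally isomorphic to $S_\R$ itself. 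For ($\Rightarrow$), the paper projects $\C_{\GL(n,\Z)}(T)$ to the quotient via Theorem 4.1, applies Borel--Harish-Chandra, Borel density and Tits there, and lifts the free subgroup back. Your use of a $\Q$-rational Levi factor (Mostow, or Wedderburn--Malcev for the centralizer algebra) places the lattice $S\cap\GL(n,\Z)$ directly inside $\C_{\GL(n,\Z)}(T)$ and avoids both the projection and the lift.
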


\begin{proof}

The assumption means that the quotient by the solvable radical
is a non-compact semi-simple Lie group defined over the rational. By
Theorem 4.1 in \cite{platonov1993algebraic}, the image of the integer points of
$\C_{\GL(n,\R)} (T) $ (i.e. $ \C_{\GL(n,\Z)} (T))$ in the quotient is so-called arithmetic in it. By
Borel-Harish Chandra's Theorem \cite{borel1962arithmetic}, it is therefore a lattice in a non-compact semisimple Lie
group. Borel proved that such lattices are Zariski dense \cite{borel1960density}. In particular, this implies that they cannot be virtually solvable. By the Tits alternative, they must contain a non abelian free subgroup \cite{tits1972free}. Since no additional relations are imposed, this subgroup lifts straightforwardly to $\C_{\GL(n,\R)}(T)$. \\
Conversely, suppose that $\C_{\GL(n,\Z)}(T)$ contains a non abelian free subgroup, then $\C_{\GL(n,\mathbb{C})}(T)$ does as well. Consequently, according to Theorem \ref{theorem3.1}, the centralizer $\C_{\GL(n,\C)}(T)$ contains a copy of $\GL(2,\mathbb{C})$. Applying Proposition $\ref{prop4}$, we conclude that it actually contains a copy of $\GL(2,\R)$. Finally, Proposition \ref{prop11} implies that the quotient of $\C_{\GL(n,\R)}(T)$ by its solvable radical is not compact. 

\end{proof}

By Propositions \ref{prop11} and \ref{prop13} we have proven Proposition \ref{prop10}.

We give our final statement on the integral centralizer of a matrix in $ \GL(n,\Q
)$.
\begin{theorem}
Let $T \in \GL(n, \Q)$ and let $\C_{\Z}:=\C_{\GL(n,\Z)}(T)$.
\begin{itemize}
\item[\rm (a)]
If $T$ has a unique Jordan block for every eigenvalue, then $\C_{\Z}$ is abelian.
\item[\rm (b)]
If $T$ has two Jordan blocks of the same size for the same eigenvalue, then $\C_{\Z}$ contains a free group on two generators.
\item[\rm (c)]
If each eigenvalue has Jordan blocks only of different size, and at least one
eigenvalue has several Jordan blocks, then $\C_{\Z}$ is polycyclic.
\end{itemize}
\end{theorem}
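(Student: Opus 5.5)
The plan is to deduce each part from the corresponding part of Theorem \ref{theorem3.1}, using $\C_{\Z}\subset \C_{\GL(n,\mathbb{C})}(T)$, and Proposition \ref{prop10} for the free subgroup.

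For (a), $\C_{\Z}$ is a subgroup of $\C_{\GL(n,\mathbb{C})}(T)$. By Theorem \ref{theorem3.1}(a), applied with $\overline{\K}=\mathbb{C}$ (which contains $\overline{\Q}$, and the Jordan structure does not depend on which algebraically closed field we use), that group is abelian. So $\C_{\Z}$ is abelian.

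For (b), Theorem \ref{theorem3.1}(b) gives a copy of $\GL(2,\mathbb{C})$ inside $\C_{\GL(n,\mathbb{C})}(T)$. This copy contains $\SL(2,\Z)$, and $\SL(2,\Z)$ contains the free group generated by $\begin{pmatrix}1&2\\0&1\end{pmatrix}$ and $\begin{pmatrix}1&0\\2&1\end{pmatrix}$ (Sanov). Proposition \ref{prop10}, i.e.\ Propositions \ref{prop11} and \ref{prop13} combined with Proposition \ref{prop4}, then yields a non-abelian free subgroup of $\C_{\Z}$. A free group of rank $\geq 2$ contains a free group on two generators.

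For (c), Theorem \ref{theorem3.1}(c) says $\C_{\GL(n,\mathbb{C})}(T)$ is solvable, so $\C_{\Z}$ is a solvable subgroup of $\GL(n,\Z)$. The key input is Mal'cev's theorem: every solvable subgroup of $\GL(n,\Z)$ is polycyclic. This holds because solvable subgroups of $\GL(n,\Z)$ are virtually triangularizable over $\mathbb{C}$ (Kolchin–Mal'cev). Combined with the integrality and discreteness of $\GL(n,\Z)$, this gives that every subgroup is finitely generated, which is the standard characterization of polycyclic among solvable groups. With Mal'cev's theorem, (c) follows immediately.

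The main obstacle is therefore (c), namely justifying the passage from solvable to polycyclic. The plan is to cite Mal'cev, as presented in Segal \cite{segal1983polycyclic}, Chapter 2, rather than reprove it. If a self-contained argument were wanted, I would take the Zariski closure of $\C_{\Z}$, which is a solvable $\Q$-group. Its identity component has unipotent radical $U$ and maximal torus part $S$. Then $\C_{\Z}\cap U$ is a finitely generated torsion-free nilpotent group, and the image of $\C_{\Z}$ in the torus is an arithmetic subgroup of a $\Q$-torus, hence finitely generated abelian. Since extensions of polycyclic groups by polycyclic groups are polycyclic, this gives the result.

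A second, minor point concerns (b): Proposition \ref{prop13} is stated for the solvable radical quotient of the real centralizer. I should check that this quotient is a $\Q$-group, which is true because the radical of a $\Q$-group is defined over $\Q$. In the paper this is taken as given, so I simply invoke it.
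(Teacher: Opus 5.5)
Your proposal is correct and follows the paper's proof. For (a) and (c) you pass to the subgroup $\C_{\Z}\subset\C_{\GL(n,\mathbb{C})}(T)$, apply Theorem~\ref{theorem3.1}, and add Mal'cev's theorem for polycyclicity in (c); for (b) you use the chain Proposition~\ref{prop4} $\Rightarrow$ Proposition~\ref{prop11} $\Rightarrow$ Proposition~\ref{prop13}, exactly as the paper does (your Sanov detour is harmless but unnecessary, since the copy of $\GL(2,\R)$ is all those propositions need).
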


\begin{proof}
  Since $\C_{\Z}$ is a subgroup of $\C_{\GL(n,\mathbb{C})}(T)$, we easily get $(a)$ from Theorem \ref{theorem3.1}. We also have, under the assumption of $(c)$, that $\C_{\Z}$ is solvable. It is also $\Z$-linear since it is in $\GL(n,\Z)$. Therefore, by Malcev \cite{mal1951certain} it is polycyclic and we have claim $(c)$.
    If $T \in \GL(n,\R)$ has two  blocks of the same size for the same eigenvalue. There is a copy of $\GL(2,\mathbb{C})$ in $\C_{\GL(n,\mathbb{C})}(T)$ and we showed in Proposition \ref{prop13} that in this case, a free group on two generators is actually contained in $\C_{\Z}$ which proves $(b)$. 
\end{proof}

\begin{remark}
    In contrast to Theorem \ref{theorem3.1}, it is not clear whether the three assumptions appearing in Theorem \ref{theorem1} are mutually exclusive. More precisely, it remains to identify whether there exists $T$ in $\GL(n,\Q)$  such that its centralizer $\C_{\GL(n,\R)}(T)$ is solvable but non abelian, while the arithmetic centralizer $\C_{\GL(n,\Z)}(T)$ is abelian.
\end{remark}

\begin{remark}

By Remark \ref{remark4}, one can effectively determine in which case the centralizer of a given matrix falls. 

\end{remark}

\section{Conjugacy problem and orbit stabilizer problem in groups preserving a form}\label{section3}

\subsection[Reduction of the conjugacy problem in HM to an orbit problem] {Reduction of the Conjugacy Problem in $H_M$ to an Orbit Problem}

Consider a non degenerate bilinear form $b$ on $\Z^{n}$ and its matrix $M:=(b(e_i,e_j))_{1 \le i,j \le n}$ in the canonical basis. The matrix $M$ belongs to $\M_n(\Z)$ and is invertible. The group of automorphisms of $\Z^n$ that preserve $b$ is canonically isomorphic to 

$$H_M := \{ A \in \GL(n,\Z) \mid \text{$AMA^t = M$}\}.$$

For example, if $b$ is a symplectic form, then $M$ is a skew symmetric matrix, and $H_M$ is an integral symplectic group. On the other hand, if $b$ is symmetric, then $M$ is a symmetric matrix and $H_M$ is an integral orthogonal group (its rank, in the sense of arithmetic groups, depends  on the signature of $b$). \\
The conjugacy problem in $H_M$ is a special case of the orbit problem
for the action of $H_M$ on $\GL(n,\Q)$ by conjugation, i.e. deciding whether given matrices $T$ and $\widehat{T}$ in $\GL(n,\Q)$ are conjugate by an element in $H_M$. We consider this problem, together with another orbit problem, as follows. Consider the action $\alpha_T$ defined by

    \[
\begin{array}{ccccc}
\alpha_T:
&  \C_{\GL(n,\Z)}(T) \times \M_{n}(\Q) &  \longrightarrow \M_{n}(\Q) & \\
& (C,A) & \longmapsto CAC^t.\\
\end{array}
 \]
 The associated orbit problem asks whether two given elements of $H_M$ are in the same $\alpha_T$-orbit.
In this section, we write $\C_{\Z}:=\C_{\GL(n,\Z)}(T)$ to shorten the notation.

\begin{proposition}\label{proposition15}
Let $T, \widehat{T} \in \GL(n, \Q)$. The following statements are equivalent:
\begin{itemize}
    \item[(i)] $T$ and $\widehat{T}$ are conjugate by an element of $H_M$.
    \item[(ii)] There exists $P_0 \in \GL(n,\Z) $  such that  $\widehat{T}=P_0TP_0^{-1}$ and for all such $P_0$, there exists $C_0 \in \C_\Z$ such that $C_0MC_0^t=P_0^{-1}M(P_0^{-1})^t$.
    \item [(iii)] There exists a matrix $P_0$ of $\GL(n,\Z)$ such that $\widehat{T}=P_0TP_0^{-1} $ and for all such $P_0$, the matrices $M$ and $P_0^{-1}M(P_0^{-1})^t$ are in the same $\alpha_T$-orbit.
     
\end{itemize}
\end{proposition}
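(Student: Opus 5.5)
The plan is to prove (i)$\Rightarrow$(ii)$\Rightarrow$(iii)$\Rightarrow$(i). The whole argument rests on one observation: conjugators in $\GL(n,\Z)$ from $T$ to $\widehat{T}$ form a single coset of the centralizer. Explicitly, if $\widehat{T}=P_0TP_0^{-1}$, then $P\in\GL(n,\Z)$ also satisfies $\widehat{T}=PTP^{-1}$ exactly when $P_0^{-1}P\in \C_\Z$, i.e.\ when $P=P_0C$ for some $C\in\C_\Z$. This is a one-line computation, and I will state it first.

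For (i)$\Rightarrow$(ii), take $P\in H_M$ with $\widehat{T}=PTP^{-1}$. Since $H_M\subset\GL(n,\Z)$, such a $P_0$ exists (take $P_0=P$). Now let $P_0$ be an arbitrary integral conjugator. By the coset observation, $P=P_0C_0$ with $C_0\in\C_\Z$. The condition $PMP^t=M$ then reads $P_0C_0MC_0^tP_0^t=M$. Multiplying on the left by $P_0^{-1}$ and on the right by $(P_0^{-1})^t=(P_0^t)^{-1}$ gives $C_0MC_0^t=P_0^{-1}M(P_0^{-1})^t$, which is exactly the required identity.

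The step (ii)$\Leftrightarrow$(iii) is just the definition of the action. Being in the same $\alpha_T$-orbit means there is some $C\in\C_\Z$ with $\alpha_T(C,M)=CMC^t=P_0^{-1}M(P_0^{-1})^t$. One small point: $M$ and $P_0^{-1}M(P_0^{-1})^t$ have integer entries, so they lie in $\M_n(\Q)$ and the action makes sense.

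For (iii)$\Rightarrow$(i), fix one $P_0$ as given and a $C_0\in\C_\Z$ with $C_0MC_0^t=P_0^{-1}M(P_0^{-1})^t$. Set $P:=P_0C_0\in\GL(n,\Z)$. Then $PTP^{-1}=P_0C_0TC_0^{-1}P_0^{-1}=P_0TP_0^{-1}=\widehat{T}$. Moreover $PMP^t=P_0(C_0MC_0^t)P_0^t=M$, so $P\in H_M$. This only uses existence for a single $P_0$, so the universal quantifier in (ii)/(iii) is harmless.

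There is no real obstacle here. The only care needed is the bookkeeping of transposes and inverses, and in particular reading the ``for all such $P_0$'' clause correctly: (i) yields it for every $P_0$ via the coset observation, while the converse direction needs only one $P_0$.
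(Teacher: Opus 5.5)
Your proof is correct and follows the same route as the paper: all integral conjugators form the coset $P_0\C_{\Z}$, the element $P_0C_0\in H_M$ gives the converse, and (ii)$\Leftrightarrow$(iii) holds by definition of $\alpha_T$. In (i)$\Rightarrow$(ii) you are more careful than the paper, which only checks the condition for the particular conjugator $P_0\in H_M$ (with $C_0=I_n$), whereas your coset argument verifies it for every integral conjugator $P_0$, as the ``for all such $P_0$'' clause requires.
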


\begin{proof}
Assume $(i)$ and let us show that it implies $(ii)$. We are thus given $P_0 \in H_M$ such that  $\widehat{T}=P_0TP_{0}^{-1}$. We have $M=P_{0}^{-1}M(P_{0}^{-1})^t$ because $P_{0}^{-1} \in H_M$. Thus, $(ii)$ holds for $C=I_n$.

Assume $(ii)$ and let us show that it implies $(i)$. Now $P_0 \in \GL(n,\Z)$ and $C_0 \in \C_{\Z}$ are given. It is a classical fact that the set $\{ P\in \GL(n,\Z) ;\quad \text{$\widehat{T}=PTP^{-1}$} \}$ is $P_0 \cdot \C_{\Z}$. We want $P_0 \cdot \C_{\Z}$ to intersect $H_M$. Consider $P_0C_0$: one has $P_0C_0 \in P_0\C_{\Z}$ and $(P_0C_0)M(P_0C_0)^t=M$. Thus, $P_0C_0 \in H_M$ and $(P_0C_0)T(P_0C_0)^{-1}=\widehat{T}$.

$(iii)$ is a reformulation of $(ii)$ using the action $\alpha_T$.

\end{proof}

\subsection[An algorithm for the conjugacy problem in HM for non derogatory matrices]{An algorithm for the conjugacy problem in $H_M$ for non derogatory matrices}\label{section3.2}
In order to decide, given $T, \widehat{T}$ as in Proposition \ref{proposition15},  whether
they are conjugate by an element of $H_M$, it remains to decide whether
there exists
$C_0 \in \C_{\Z}$ with $C_0MC_0^t = M'$  for an explicit matrix $M'$ as in the
third point of Proposition \ref{proposition15}. We can answer this question whenever the acting group is a polycyclic matrix group using \cite{eick2003orbit}. We restrict ourselves to a theoretical description of the algorithm and omit any implementation, as implementing such an algorithm is highly non trivial. This implementation is carried out by Timo Velten \cite{velten2026polycyclic} in the technically demanding setting of a matrix with a polycyclic centralizer. In the following, we outline the algorithm that solves the conjugacy and centralizer problem in $H_M$ under the assumption that $\C_{\GL(n,\Z)}(T)$ is polycyclic.
We get the following: 

\begin{corollary}
  Let $b$ be a $\Q$ non degenerate bilinear form, $M$ its matrix in the canonical basis and $H_M$ the subgroup of $\GL(n,\Z)$ preserving the bilinear form $b$.
    The problem of whether two given matrices in $\GL(n,\Q)$ are conjugate by an element of $H_M$ is decidable. More precisely, Algorithm \myref{A} decides whether these matrices are conjugate in $H_M$ and, provides a conjugating element if it exists.
\end{corollary}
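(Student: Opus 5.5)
The plan is to describe Algorithm A explicitly and check that each of its steps terminates, relying on Proposition \ref{proposition15}, the Eick--Hofmann--O'Brien algorithm \cite{eick2019conjugacy} and the Eick--Ostheimer orbit algorithm \cite{eick2003orbit}. The input is $T,\widehat{T}\in\GL(n,\Q)$ and $M$.

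\textbf{Step 1.} Run \cite{eick2019conjugacy} on $T$ and $\widehat{T}$. It decides whether they are conjugate in $\GL(n,\Z)$.
\begin{itemize}
\item If they are not, they are not conjugate in $H_M\subset\GL(n,\Z)$, and we answer no.
\item Otherwise it returns some $P_0\in\GL(n,\Z)$ with $\widehat{T}=P_0TP_0^{-1}$. The same algorithm also returns a finite generating set of $\C_\Z=\C_{\GL(n,\Z)}(T)$.
\end{itemize}

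\textbf{Step 2.} Set $M':=P_0^{-1}M(P_0^{-1})^t$. By Proposition \ref{proposition15}, $T$ and $\widehat{T}$ are conjugate in $H_M$ if and only if $M$ and $M'$ lie in the same $\alpha_T$-orbit. This holds for any single choice of $P_0$, since the proof shows the set of conjugators is the coset $P_0\C_\Z$. So the question reduces to an orbit problem for the linear action $A\mapsto CAC^t$ of $\C_\Z$ on the $\Q$-vector space $\M_n(\Q)$.

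\textbf{Step 3.} Decide which case of Theorem \ref{theorem1} applies, using Remark \ref{remark4}.
\begin{itemize}
\item In cases (a) and (c), $\C_\Z$ is abelian or polycyclic, so it is a polycyclic subgroup of $\GL(n,\Z)$. Its action on $\M_n(\Q)$ is given by the matrices $C\otimes C$, which again form a polycyclic rational matrix group. From the generators found in Step 1 we compute a polycyclic presentation, for instance with the methods of Assmann--Eick \cite{Assmann}. We then apply \cite{eick2003orbit} to decide whether $M'\in \C_\Z\cdot M$; when it is, the algorithm returns $C_0$ with $C_0MC_0^t=M'$ (or with the inverse convention, adjusted accordingly).
\item The conjugator is then $P_0C_0\in H_M$, exactly as in the proof of (ii)$\Rightarrow$(i).
\end{itemize}

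\textbf{Main obstacle.} Case (b) is the hard one, because $\C_\Z$ contains a free group and \cite{eick2003orbit} does not apply. As the section title indicates, I would state the corollary's algorithm under the standing hypothesis that $\C_\Z$ is polycyclic (cases (a) and (c), which include all non-derogatory matrices), and make that hypothesis explicit in Algorithm A.

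For full generality in case (b), the route I would try first is the Grunewald--Segal theory \cite{grunewald1980some}. It gives decidability of orbit problems for arithmetic groups acting rationally on $\Q$-vector spaces, and $\C_\Z$ is arithmetic in the $\Q$-algebraic group $\C_{\GL(n,\Q)}(T)$. This yields abstract decidability but not a practical algorithm.

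The remaining technical check is that \cite{eick2003orbit} applies to the action $C\mapsto (A\mapsto CAC^t)$: the orbit of $M$ may be infinite, and their method handles this through the finitely generated module and lattice structure. I would verify this by restricting to the $\Z$-lattice spanned by the orbit, which is preserved by the action, before invoking their algorithm.
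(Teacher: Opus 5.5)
Your proposal is correct and is essentially the paper's argument: Proposition \ref{proposition15} reduces conjugacy in $H_M$ to deciding whether some $C_0\in\C_{\Z}$ satisfies $C_0MC_0^t=P_0^{-1}M(P_0^{-1})^t$. Eick--Hofmann--O'Brien supply $P_0$ and generators of $\C_{\Z}$, Eick--Ostheimer solves the orbit problem for the polycyclic group of matrices $C\otimes C$, and $P_0C_0$ is returned, all under the polycyclicity hypothesis (cases (a) and (c)) that the paper also imposes on Algorithm A. Your extra remarks are accurate and make explicit points the paper leaves implicit: any single $P_0$ suffices because the conjugators form the coset $P_0\C_{\Z}$; a polycyclic presentation must be computed before calling the orbit algorithm; and case (b) gets only abstract decidability, via Grunewald--Segal.
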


Recall that we can detect whether $\mathcal{C}_{\GL(n,\Z)}$ is polycyclic or not using Remark \ref{remark4}.

In  \cite{eick2019conjugacy}, the authors present the following algorithms, which are implemented in MAGMA.
\begin{itemize}
    \item \textbf{Algorithm I} For $T, \widehat{T} \in \GL(n,\Q)$, say whether there exists $P_0\in \GL(n,\Z)$ such that $\widehat{T}=P_0TP_0^{-1}$, and determine $P_0$ if it exists.
    \item \textbf{Algorithm II} For $T \in \GL(n,\Q)$, determine a set of generators for $\C_{\GL(n,\Z)}(T)$.
    
\end{itemize}

In \cite{eick2003orbit},  the authors present the following algorithms, which are implemented in GAP. \newline
For $G$ a polycyclic group and $\langle g_1,...,g_k \rangle $ a presentation : 
\begin{itemize}
    \item \textbf{Algorithm III} For $v,w \in \Q^m$, say whether there exists $g \in G$ such that $gv =w$ and if so give $g$ is returned decomposed in the generating set $\langle g_1,...,g_k \rangle$.
    \item \textbf{Algorithm IV} For $v \in \Q^m$ compute a set of generators for elements $g\in G$ that satisfies $gv=v$.
\end{itemize}

For an element $C \in \GL(n,\Q)$, we consider the endomorphism $\alpha_C : X \longmapsto CXC^t$. We denote by $\Mat(\alpha_C) \in \GL(n^2,\Q)$ the matrix of the endomorphism $\alpha_C$ in the canonical basis of $\M_n(\Q)$.

To decide whether two matrices  $T,\widehat{T} \in \GL(n,\Q)$ are conjugate in $H_M$, with the condition that $\C_{\GL(n,\Z)}(T)$ is polycyclic, and if so determine a conjugating element, we propose the following algorithm :
\begin{algorithm}[\textbf{A}]\phantomsection\label{A}
Let $T, \widehat{T} \in \GL(n,\Q) $.
\begin{itemize}
    \item Decide if $T$ and $\widehat{T}$ are conjugate in $\GL(n, \Z)$ using \textbf{Algorithm I}; if they are return $P_0$ the conjugating matrix; if not, then return false.
    \item Using  \textbf{Algorithm II}, compute $C_1,...,C_k $ such that $\langle C_1,...,C_k\rangle= \C_{\GL(n,\Z)}(T)$.
    \item For every $1 \le i \le k$ construct the matrices $c_i:=\Mat(\alpha_{C_i})$. Set $G:= \langle c_1,...,c_k\rangle$.
    \item Transform the matrices according to the basis  $P_0$ and $P_0^{-1}M(P_0^{-1})^t$ in a vector of size $n^2$, respecting the canonical basis of $\M_n(\Q)$. Denote them by $v,w$.
    \item Using \textbf{Algorithm III}, decide if the vector $v$ and $w$ are in the same $G$-orbit. If they are then consider $c$ such that $cv=w$ and decompose $c$ in the generating set $\{c_1,...,c_k\}$. If not, return false.
    \item Using the decomposition of $c=c_{i_1}c_{i_2}...c_{i_j}$ in the generating set $\{c_1,...,c_k\}$, take  $C =C_{i_1}C_{i_2}...C_{i_j}$. 
    \item Return yes, and $P_0C$.

\end{itemize}

\end{algorithm}
The following algorithm computes a generating set for $\C_{H_M}(T)$ i.e. a generating set for $\C_{\GL(n,\Z)}(T)\cap H_M $.
\begin{algorithm}[\textbf{B}]
 Let $T\in \GL(n,\Q)$. 
\begin{itemize}
    \item Using \textbf{Algorithm II}, compute a generating set for $\C_{\GL(n,\Z)}(T) $.
    \item Transform the matrix $M$ of the bilinear form $b$, respecting the canonical basis of $\M_n(\Q)$. Call it $v$.
    \item Using \textbf{Algorithm IV}, return a generating set for the subgroup of $\C_{\GL(n,\Z)}(T)$ that verifies $gv=v$.
\end{itemize}
 
\end{algorithm}


\bibliographystyle{plain}
			\bibliography{Bib.bib}	

\sc{ \small Adem Zeghib, 
Institut Fourier, Laboratoire de mathématique, Université Grenoble Alpes, Grenoble, France.\\
Email address: adem.zeghib (at) univ-grenoble-alpes.fr} 

\end{document}